\NeedsTeXFormat{LaTeX2e}

\documentclass[11pt]{amsart}
\usepackage{amsmath}
\usepackage{amsfonts}
\usepackage{latexsym}
\usepackage{amssymb, color}
\topmargin=1mm 
\oddsidemargin=4.6mm
\evensidemargin=\oddsidemargin \textwidth=160mm
\textheight=200mm
\footskip=20mm
\headheight=10mm

\pagestyle{plain}
\chardef\bslash=`\\ 




\makeatletter
\def\verbatim{\interlinepenalty\@M \@verbatim
  \leftskip\@totalleftmargin\advance\leftskip2pc
  \frenchspacing\@vobeyspaces \@xverbatim}
\makeatother
\hfuzz1pc 



\newtheorem{thm}{Theorem}[section]

\newtheorem{lem}[thm]{Lemma}
\newtheorem{prop}[thm]{Proposition}
\newtheorem{ex}[thm]{Example}
\newtheorem{rem}[thm]{Remark} 



\numberwithin{equation}{section}

%




\newcommand{\ZZ}{{\mathbb Z}}



\begin{document}
\bibliographystyle{plain}

\title[Polynomial control of lower stability bounds and inversion norms] {Polynomial control on
 weighted  stability bounds and inversion norms of localized matrices on simple graphs}

\author{Qiquan Fang, Chang Eon Shin
 and  Qiyu Sun}

  \thanks{
  The project is partially supported by    NSF of China (Grant Nos.11701513, 11771399, 11571306), the Basic
Science Research Program through the National Research Foundation of Korea (NRF) funded by the Ministry of
Education, Science and Technology (NRF-2019R1F1A1051712) and
the National Science Foundation (DMS-1816313)
}

\address{Qiquan Fang: Department of Mathematics, Zhejiang University of Science and Technology, Hangzhou, Zhejiang, 310023, China. Email: qiquanfang@163.com}

\address{Chang Eon Shin:  Department of Mathematics, Sogang University, Seoul, 04109,  Korea.
 Email: shinc@sogang.ac.kr }

\address{Qiyu Sun: Department of Mathematics,  University of Central Florida,
Orlando, FL 32816, USA. Email: qiyu.sun@ucf.edu}


\date{\today }

\subjclass[2010]{47G10, 45P05, 47B38, 31B10, 46E30}

\keywords{Weighted stability, Wiener's lemma,  norm-controlled inversion, differential subalgebras,  matrices on graphs, Beurling dimensions of graphs, Beurling algebras of matrices,  Muckenhoupt weights on graphs} 


\maketitle
\begin{abstract}
 The (un)weighted stability for some matrices is one of essential hypotheses in time-frequency analysis and
  applied harmonic analysis. In the first part of this paper, we show that
  for a localized matrix in a Beurling algebra, its weighted stabilities for different exponents and Muckenhoupt weights are  equivalent to each other,
  and   reciprocal of its optimal lower stability bound for one exponent and  weight is controlled by a polynomial of
  reciprocal
 of  its optimal lower stability bound for another exponent and weight.
 Inverse-closed Banach subalgebras of
matrices with certain off-diagonal decay can be informally interpreted as localization preservation
under inversion, which is of great importance in many mathematical and engineering fields.
Let ${\mathcal B}(\ell^p_w)$ be the Banach algebra  of bounded operators on the weighted sequence space $\ell^p_w$ on a simple graph.
In the second part of this paper, we prove that  Beurling algebras of  localized matrices on a simple graph are inverse-closed in  ${\mathcal B}(\ell^p_w)$
for all $1\le p<\infty$ and  Muckenhoupt $A_p$-weights $w$, and  the Beurling  norm of  the  inversion of a matrix  $A$ is bounded by a bivariate polynomial of the Beurling  norm of the matrix $A$ and the operator norm of its inverse  $A^{-1}$ in ${\mathcal B}(\ell^p_w)$.
\end{abstract}

\section{Introduction}
Let ${\mathcal G}:=(V,E)$ be a connected simple graph
 with the vertex set $V$ and edge set  $E$.
 Our illustrative examples are (i) the $d$-dimensional lattice graph
 ${\mathcal Z}^d:=(\ZZ^d, E^d)$ where there exists an edge between  $k$ and $l\in \ZZ^d$, i.e., $(k, l)\in E^d$, if the Euclidean distance between $k$ and $l$ is one; (ii) the
 (in)finite circulant graph  ${\mathcal C}_G= (V_G, E_G)$
 associated with an abelian group
$$V_G=\Big\{ \prod_{i=1}^k  g_i^{n_i}, \ n_1, \ldots, n_k\in \ZZ\Big\}$$
generated by $G=\{g_1, \ldots, g_k\}$, where   $(\lambda, \lambda')\in E_G$
if and only if   either $\lambda ({\lambda'})^{-1}$ or $\lambda' \lambda^{-1}\in G$ \cite{ BE14,BE12, LO01,  samei19};
and  (iii) the communication graph of a spatially distributed  network (SDN)  whose agents  have limited sensing, data processing,
and communication  capacity for data transmission,
where agents are used as elements in  the vertex set
 and direct communication links between two agents as edges between two vertices  \cite{akyildiz02, CJS18, chong02,  shinsun19}.

For $1\le p<\infty$ and  a weight  $w=(w(\lambda))_{\lambda\in V}$  on the graph ${\mathcal G}$, let
$\ell^p_w:= \ell_w^p({\mathcal G})$ be the Banach space of all weighted $p$-summable sequences $c=(c(\lambda))_{\lambda\in V}$ equipped with
the standard norm   
\begin{equation*}
\|c\|_{p,w}=\Big(\sum_{\lambda \in V} |c(\lambda)|^p w(\lambda) \Big)^{1/p}.
\end{equation*}
For the trivial weight  $w_0=(w_0(\lambda))_{\lambda\in V}$, we will use the simplified notation
$\ell^p$ and $\|\cdot\|_p$ instead of  $\ell^p_w$ and $\|\cdot\|_{p, w}$, where $w_0(\lambda)=1$ for all $\lambda\in V$.
We say that  a matrix
\begin{equation}\label{def-matrix}
A:=\big(a(\lambda, \lambda') \big)_{\lambda, \lambda' \in V}
\end{equation}
on the graph ${\mathcal G}$
has {\em $\ell_w^p$-stability} if there exist two
positive constants $B_1$ and $B_2$ such that
\begin{equation}\label{stability-cond}
B_1 \|c\|_{p,w} \le \|A c\|_{p,w}\le B_2 \|c\|_{p,w}, \ \ c\in \ell^p_w
\end{equation}
\cite{akramjfa09,     shinsun19,  shincjfa09, sunxian14, tesserajfa10}.
We
call the maximal constant $B_1$   for the weighted stability inequality \eqref{stability-cond} to hold
as the {\em optimal lower $\ell^p_w$-stability
bound} of the matrix $A$ and denote by $\beta_{p, w}(A)$.
The (un)weighted stability for matrices  is an  essential  hypothesis in time-frequency analysis, applied harmonic analysis, and many other  mathematical and engineering fields  \cite{aldroubisiamreview01,  
christensenbook,  grochenigbook, moteesun17, 
sunsiam06}.  

In practical  sampling and  reconstruction on an SDN of large size,   signals  and noises  are usually contained in some range.
For  robust signal reconstruction and  noise  reduction, the sensing matrix
on the SDN is required to have stability on  $\ell^\infty$  \cite{CJS18}, however
 there are some difficulties to verify  $\ell^p$-stability  of a matrix  in a distributed  manner  for $p\ne 2$ \cite{moteesun19, sunpams10}.
For a matrix $A$ on a {\em finite} graph  ${\mathcal G}=(V, E)$,
 its weighted $\ell^p_w$-stability  are equivalent to each other for different exponents $1\le p\le\infty$ and weights $w$, since
 $\ell^p_w$ is isomorphic to $\ell^2$ for any exponent $1\le p\le\infty$ and weight $w$.
In particular, for the unweighted case one may verify that
the optimal lower stability bounds of a matrix $A$ for different exponents are comparable,
\begin{equation}\label{Mstability}
\frac{\beta_{p, w_0}(A)}{\beta_{q, w_0}(A)}\le  M^{|1/p-1/q|}, \ 1\le p, q\le\infty,
\end{equation}
where $M=\# V$ is the number of vertices of the graph ${\mathcal G}$. The above  estimation on  
optimal lower stability bounds for different exponents is unfavorable for matrices of large size, but it can be improved  only if
 the matrix $A$ has some additional property, such as off-diagonal decay.
 For an {\em infinite}  matrix $A=(a(i,j) )_{i,j\in{\mathbb Z}^d}$
 in the
Baskakov-Gohberg-Sj\"ostrand   algebra,  it is proved in \cite{akramjfa09, shincjfa09, tesserajfa10} that its unweighted  stabilities
are equivalent to each other for all exponents, i.e., for all $1\le p, q<\infty$,
\begin{equation*}
\beta_{q, w_0}(A)>0 \ {\rm if \ and \ only \ if} \ \beta_{p, w_0}(A)>0.
\end{equation*}
In \cite{sunca11},  Beurling algebras  of infinite  matrices $A=(a(i,j) )_{i,j\in{\mathbb Z}^d}$ are introduced. Comparing with the Baskakov-Gohberg-Sj\"ostrand  algebras, matrices in the Baskakov-Gohberg-Sj\"ostrand  algebra (resp. the Beurling algebra)  are dominated by a bi-infinite Toeplitz matrix associated with a  (resp. radially decreasing) sequence with certain decay,
and they are bounded linear operators on unweighted sequence spaces $\ell^p_{w_0}$ (resp. on weighted spaces $\ell^p_w$ for all Mukenhoupt $A_p$-weights $w$).
For an  infinite matrix in a Beurling algebra,  its weighted stabilities for different exponents and Muckenhoupt weights are established
in \cite{sunca11},
\begin{equation*}
\beta_{p, w}(A)>0 \ {\rm if \ and \ only \ if} \ \beta_{q, w'}(A)>0
\end{equation*}
where $1\le p, q<\infty$ and $w, w'$ are Muckenhoupt $A_p$- and $A_q$-weights respectively.
Obviously, the lattice $\ZZ^d$ is the vertex set of the lattice graph ${\mathcal Z}^d$.
  Inspired by the above observation,
  Beurling algebras ${\mathcal B}_{r,\alpha}({\mathcal G})$ of matrices $A=(a(\lambda, \lambda') )_{\lambda, \lambda' \in V}$
  on an arbitrary simple graph ${\mathcal G}=(V, E)$ are introduced in \cite{shinsun19}, where $1\le r\le \infty$ and $\alpha\ge 0$.
In \cite{shinsun19},
  unweighted stabilities of a matrix $A\in {\mathcal B}_{r,\alpha}({\mathcal G})$  for different exponents are shown to be equivalent to each other, where $1\le r\le \infty, \alpha>d_{\mathcal G} (1-1/r)$ and  $d_{\mathcal G}$ is the Beurling dimension of the graph ${\mathcal G}$.
  Moreover  we have the following polynomial control on its optimal lower stability bounds for different exponents,
\begin{equation}\label{polynomialunweighted.eq}
\frac{\beta_{p, w_0}(A)}{\beta_{q, w_0}(A)}
 \le  D_1
\Big(\frac{\|A\|_{{\mathcal B}_{r, \alpha}}}{\beta_{p, w_0}(A)}\Big)^{D_0|1/p-1/q|},\ \ 1\le p, q<\infty,
\end{equation}
where
$D_0, D_1$ are  absolute constants independent of matrices $A$ and the size $M$ of the graph ${\mathcal G}$.
In   the first part of this paper,
 we establish a
polynomial control property for a matrix $A\in {\mathcal B}_{r, \alpha}({\mathcal G})$
on the optimal lower weighted stability bounds for different exponents   
and  Muckenhoupt weights, see Theorem \ref{mainthm}  and Remark \ref{mainthm.rem} in Section \ref{lowerbound.section},

Let ${\mathcal B}(\ell^p_w)$ be the Banach algebra of all  matrices $A$ which are bounded operators
on
the weighted vector space $\ell^p_w$ and denote the norm of $A\in  {\mathcal B}(\ell^p_w)$ by $\|A\|_{{\mathcal B}(\ell^p_w)}$.
The weighted $\ell^p_w$-stability of a matrix  $A$ is usually considered as a weak notion of its invertibility, since
$$
\beta_{p, w}(A) \ge  \big(\|A^{-1} \|_{{\mathcal B}(\ell^p_w)}\big)^{-1}
$$
when the matrix  $A$  is  invertible  in $\ell^p_w$.  However for a matrix $A$ in a Beurling algebra,
we discover that its weighted stability in $\ell^{p}_w$ implies the existence of its  ``inverse"
$B=(b(\lambda, \lambda'))_{\lambda, \lambda'\in V}$ in the same Beurling algebra
such that
\begin{equation}\label{crucialestimate}
|c(\lambda)|\le \sum_{\lambda'\in V} |b(\lambda, \lambda')| (Ac)(\lambda')|, \ \lambda\in V,
\end{equation}
hold for all  vectors $c=(c(\lambda))_{\lambda\in V}\in \ell^{p}_w$, see Lemma \ref{tech.lem2}.
The above estimate is crucial  for us to discuss polynomial  control on optimal lower weighted stability bounds for different exponents and Muckenhoupt weights, and also to establish  norm-controlled inversion of Beurling algebras in   ${\mathcal B}(\ell^p_w)$
in the second topic of this paper.

\bigskip
Given two Banach algebras ${\mathcal A}$ and ${\mathcal B}$ with common identity such that ${\mathcal A}$ is a Banach subalgebra of ${\mathcal B}$, we say that
 ${\mathcal A}$ is {\it inverse-closed} in ${\mathcal B}$ if $A\in {\mathcal A}$ and
$A^{-1} \in {\mathcal B}$ implies $A^{-1}\in {\mathcal A}$ \cite{baskakov90, gltams06, jaffard90, sjostrand94,   sunca11, suntams07, suncasp05, wiener32}. An equivalent condition for the inverse-closedness of
${\mathcal A}$  in ${\mathcal B}$ is that given an $A\in {\mathcal A}$, its spectral sets $\sigma_{\mathcal A}(A)$ and $\sigma_{\mathcal B}(A)$
 in  Banach algebras ${\mathcal A}$ and ${\mathcal B}$ are the same, i.e.,
$$
\sigma_{\mathcal A}(A)=\sigma_{\mathcal B}(A)\ \ {\rm for \ all} \ A\in {\mathcal A}.
$$
In this paper, we also call the inverse-closed property for a Banach
subalgebra as Wiener's lemma for that subalgebra \cite{sunca11, suntams07, suncasp05, wiener32}.
For algebras of matrices  with certain off-diagonal decay, Wiener's lemma
 can be informally interpreted as localization preservation under inversion.
Such a localization preservation is of great importance in  applied harmonic analysis, numerical analysis,
and many mathematical and engineering fields, see the survey papers \cite{grochenig10, Krishtal11, shinsun13}
and  references therein for  historical remarks.
We remark that Wiener's lemma does not provide a norm estimate for the inversion, which is  essential 
for some mathematical and engineering  applications.

We say that a Banach subalgebra
 ${\mathcal A}$  of ${\mathcal B}$  admits {\em
norm-controlled inversion} in ${\mathcal B}$ if there exists a continuous function $h$ from
$[0, \infty)\times [0, \infty)$ to $[0, \infty)$
 such that
\begin{equation}\label{normcontrol}
\|A^{-1}\|_{\mathcal A}\le h\big(\|A\|_{\mathcal A}, \|A^{-1}\|_{\mathcal B}\big)
\end{equation}
for all $A\in {\mathcal A}$ being invertible in ${\mathcal B}$
\cite{gkII, gkI,  nikolski99, samei19, shinsun19}.  By the norm-controlled inversion \eqref{normcontrol}, we have
the following estimate for the resolvent of $A\in {\mathcal A}$,
\begin{equation}\label{normcontrolresolvent}
\|(\lambda  I-A)^{-1}\|_{\mathcal A}\le h\big(\|\lambda I-A\|_{\mathcal A}, \|(\lambda I-A)^{-1}\|_{\mathcal B}\big), \ \ \lambda\not\in \sigma_{\mathcal B}(A)=\sigma_{\mathcal A}(A),
\end{equation}
where $I$ is the common identity of Banach algebras ${\mathcal A}$ and ${\mathcal B}$.
The norm-controlled inversion is  a strong version of Wiener's lemma.
The classical Wiener algebra of periodic functions with summable Fourier coefficients is an inverse-closed subalgebra of the  Banach algebra
 of  all periodic continuous functions
  \cite{wiener32},  however it does not have norm-controlled inversion \cite{belinskiijfaa97, nikolski99}.
We say that
${\mathcal A}$ is  a {\em differential subalgebra of order $\theta\in (0, 1]$} in ${\mathcal B}$  
if there exists a positive constant $D:=D({\mathcal A}, {\mathcal B}, \theta)$ such that
\begin{equation}\label{differentialnorm.def}
\|AB\|_{\mathcal A}\le D\|A\|_{\mathcal A} \|B\|_{\mathcal A} \Big (\Big(\frac{\|A\|_{\mathcal B}}{\|A\|_{\mathcal A}}\Big)^\theta +
\Big(\frac{\|B\|_{\mathcal B}}{\|B\|_{\mathcal A}}\Big)^\theta
\Big)
\quad {\rm for \ all} \  A, B \in {\mathcal A}.
\end{equation}
The concept of   differential subalgebras of order $\theta$  was introduced in \cite{blackadarcuntz91, kissin94, rieffel10} for $\theta=1$ and
\cite{christ88, gkI,     shinsun19} for $\theta\in (0, 1)$.
 It has been proved that
 a differential $*$-subalgebra  ${\mathcal A}$  of a symmetric $*$-algebra ${\mathcal B}$ has norm-controlled inversion in ${\mathcal B}$
\cite{gkII, gkI,samei19, shinsun20,  suncasp05}.
A crucial step in the
proof  is to introduce
  $B:=I- \|A^* A\|_{\mathcal B}^{-1} A^* A$ for any $A\in {\mathcal A}$ being invertible in ${\mathcal B}$, whose spectrum is contained in an interval
   on the positive real axis.
   The above reduction  depends on the requirements that
   ${\mathcal B}$ is symmetric and both ${\mathcal A}$ and ${\mathcal B}$ are $*$-algebras with common identity and involution $*$.

   Several algebras of localized matrices with certain off-diagonal decay, including the
Gr\"ochenig-Schur algbera, Baskakov-Gohberg-Sj\"ostrand  algebra, Beurling algebra and Jaffard algebra,
have been shown to be differential $*$-subalgebras of the symmetric $*$-algebra ${\mathcal B}(\ell^2)$,
and hence they admit norm-controlled inversion in ${\mathcal B}(\ell^2)$
 \cite{gkII, gkI,   grochenigklotz10,   jaffard90, rssun12, samei19, shinsun19, sunca11,   suntams07,  suncasp05}.
 In \cite{gkII, gkI, shinsun19}, the authors show that for the  Baskakov-Gohberg-Sj\"ostrand  algebra, Jaffard algebra, and
 Beurling algebra of matrices, a bivariate polynomial can be selected to be the norm-control function $h$
 in \eqref{normcontrol}.

For applications in some mathematical and
engineering fields, the  widely-used algebras ${\mathcal B}$ of infinite matrices
are  the operator algebras ${\mathcal B}(\ell^p_w), 1\le p\le \infty$, which are symmetric only when $p=2$.
To our knowledge, there is no literature on  norm-controlled inversion in  a nonsymmetric algebra.
In this paper, we prove that  Beurling algebras of  localized matrices admit norm-controlled inversion in  ${\mathcal B}(\ell^p_w)$
for all exponent $1\le p<\infty$ and  Muckenhoupt $A_p$-weights $w$, and that the Beurling algebra norm of  the inversion of a matrix  $A$ is bounded by a bivariate polynomial of its Beurling algebra norm of the matrix $A$ and the operator norm of its inverse  $A^{-1}$ in ${\mathcal B}(\ell^p_w)$, see Theorem \ref{norm-contro-inversion.thm} and Remark \ref{norm-contro-inversion.thm.remark}.

The paper is organized as follows. In Section \ref{Preliminaries.section}, we recall some
preliminary results on a connected  simple graph ${\mathcal G}$,  Beurling
 algebras  of matrices on the graph ${\mathcal G}$ and on its maximal disjoint sets,  
and  weighted norm inequalities for matrices in a Beurling algebra. For matrices in a Beurling
   algebra, we consider the equivalence of their weighted stability
 for different exponents $1\le p< \infty$ and  Muckenhoupt $A_p$-weights $w$ in Section \ref{lowerbound.section},
 and their norm-controlled inversion in ${\mathcal B}(\ell^p_w)$ in Section \ref{normedinversion.section}.
 All proofs, except the proof of Theorem \ref{mainthm} in Section \ref{lowerbound.section},  are collected in Section \ref{proofs.section}.

Notation: For a real number $t$, we use the standard notation $\lfloor t\rfloor$ and $\lceil t\rceil$  to denote its floor and ceiling, respectively.  For two terms $A$ and $B$, we write $A\lesssim B$ if $A\le CB$ for some absolute constant  $C$, and
$A\approx B$ if $A\lesssim B$  and $B\lesssim A$.

\section{Preliminaries}\label{Preliminaries.section}

In Section \ref{graph.subsection},  we recall the doubling property for the counting measure $\mu$ on a connected simple graph ${\mathcal G}$
\cite{
CJS18,
shinsun19, YYH13book},
show that the counting measure $\mu$ has the strong polynomial growth property \eqref{strongpolynomial-growth}, and then define generalized Beurling dimension of the graph ${\mathcal G}$.
In Sections \ref{beurling.subsection} and \ref{beurlingmaximal.subsection},
we  recall the definition of  two closely-related Beurling algebras of matrices on
the graph ${\mathcal G}$ and on its maximal disjoint sets \cite{beurling49,  shinsun19, sunca11}, and
 provide some algebraic and approximation properties of those two Banach algebras
 of matrices. In Section  \ref{weighted.subsection}, we prove 
 that any matrix in a Beurling algebra
 is a  bounded  linear operator on weighted vector spaces $\ell^p_w$ for all $1\le p<\infty$ and
 Muckenhoupt $A_p$-weights $w$.

\subsection{Generalized Beurling dimension of a connected simple graph}\label{graph.subsection}
Let $\rho$ be
the {\em geodesic distance}  on  the connected simple graph ${\mathcal G}$, which is the nonnegative function  on $V\times V$ such that $\rho(\lambda,\lambda)=0, \lambda\in V$,  and  $\rho(\lambda,\lambda')$ is the number of edges in a shortest path
connecting distinct vertices $\lambda, \lambda'\in V$ \cite{chungbook}.
 This geodesic distance $\rho$ is a metric on $V$ of a connected simple
graph ${\mathcal G}$. For the lattice graph ${\mathcal Z^d}$, one may verify that its geodesic distance between two points $k=(k_1,..., k_d)$
 and $\ell=(\ell_1,..., \ell_d)$ is given  by $\rho(k, \ell):=\sum_{j=1}^d|k_j-\ell_j|$;
 for the circulant graph
${\mathcal C}_G$ generated by $G=\{g_1, \ldots, g_k\}$,  we have
$$\rho(\lambda, \lambda')= \inf \Big\{\sum_{i=1}^k |n_i|, \  \lambda' \lambda^{-1}=\prod_{i=1}^k g_i^{n_i}, n_1, \ldots, n_k\in \ZZ\Big\};$$
and for the communication graph of an  SDN, $\rho(\lambda, \lambda')$ is the time delay of data transmission between two agents $\lambda$ and $\lambda'$.
Using the geodesic distance  $\rho$, we define the closed ball with center $\lambda\in V$ and radius $r>0$ by
$$B(\lambda, r)= \{\lambda'\in V, \ \rho(\lambda, \lambda')\le r\},$$
which contains all  $r$-neighboring vertices of  $\lambda\in V$.

Let $\mu$ be the  counting measure on  the vertex set $V$, i.e.,
$\mu(F)$ is  the number of vertices in $F\subset V$.
In this paper, we always assume that the counting measure $\mu$
has {\em doubling property}, i.e.,  there exists a positive constant $D$ such that
\begin{equation}\label{doubling}
\mu\big(B(\lambda, 2r)\big) \le D \mu\big(B(\lambda, r)\big) \ \ \text{for all }   \lambda \in V \text{ and } r > 0
\end{equation}
\cite{CJS18, shinsun19, YYH13book}. We denote the minimal constant $D$ in the doubling property \eqref{doubling}  by $D(\mu)$, which is  also known as the {\em doubling constant} of the measure $\mu$.  Applying the doubling property \eqref{doubling}  repeatedly, we have
\begin{equation}\label{doubling2}
\mu(B(\lambda, r))\le \mu\big(B(\lambda, 2^{\lceil \log_2 (r/r')\rceil} r')\big) \le D(\mu) (r/r')^{\log_2 D(\mu)}\mu(B(\lambda, r')), \ r\ge  r'> 0.
\end{equation}
Taking $r'=1-\epsilon$ in \eqref{doubling2} for sufficiently small $\epsilon>0$, we conclude that
the counting measure $\mu$ has {\em polynomial growth} in the sense that
\begin{equation}\label{polynomial-growth}
\mu(B(\lambda, r))\le D_1 (r+1)^{d_1}\ \ \text{for all } \  \lambda \in V \text{ and } r\ge 0,
\end{equation}
where  $D_1$ and $d_1$ are positive constants.  The notion of polynomial growth for the counting measure  $\mu$  
is introduced in \cite{CJS18}, where the minimal constants $d_1$ and $D_1$ in \eqref{polynomial-growth}, to be denoted by $d_{\mathcal G}$ and $D_{\mathcal G}$,
  are known as the {\em Beurling
dimension} and {\em density} of the graph ${\mathcal G}$ respectively.

Let $N\ge 0$.  We say that  a set $V_N\subset V$ of fusion vertices is   {\it maximal $N$-disjoint} if
\begin{equation}\label{maximum1}
B(\lambda, N)\cap \big(\cup_{\lambda_m \in{V_N}} B(\lambda_m, N)\big) \ne \emptyset
\  \text{ for all } \lambda \in V
\end{equation}
and
\begin{equation}\label{maximum2}
B(\lambda_m, N)\cap B(\lambda_n, N) =\emptyset \ \text{ for all distinct }  \lambda_m, \lambda_n\in V_N.
\end{equation}
For $N=0$, one may verify that the whole set $V$ is the only  maximal $N$-disjoint set $V_N$,  i.e.,
\begin{equation}\label{V1.pro}
V_N=V \ {\rm if} \ N=0,
\end{equation}
while for $N\ge 1$, one may construct
many maximal $N$-disjoint sets $V_N$. For example, we can construct a maximal $N$-disjoint set $V_N = \{\lambda_m, m\ge 1\}$
by  taking a vertex $\lambda_1\in V$ and defining vertices $\lambda_m, m\ge 2$, recursively by
$\lambda_m= {\rm arg \ min}_{\lambda\in A_m}  \rho(\lambda, \lambda_1),$
 where $A_m=\{\lambda\in V, B(\lambda, N)\cap \cup_{m'=1}^{m-1}
B(\lambda_{m'}, N)=\emptyset\}$ \cite{CJS18}.
For a maximal $N$-disjoint set $V_N$ of fusion vertices,
it is observed in \cite{CJS18, shinsun19} that for any $N'\ge 2N$,  $B(\lambda_m, N'), \lambda_m\in V_N$, form a finite covering
of the whole set $V$,
and
\begin{equation}\label{overlap-counting}
1  \le   \inf_{\lambda \in V} \sum_{\lambda_m \in V_N}\chi_{B(\lambda_m, N')}(\lambda)
\le \sup_{\lambda \in V} \sum_{\lambda_m \in V_N}\chi_{B(\lambda_m, N')}(\lambda)
\le \big(D(\mu)\big)^{\lceil \log_2(2N'/N+1)\rceil}.
\end{equation}
For $\lambda\in V$ and $R\ge 0$, set
\begin{equation}\label{AR.def}
A_R(\lambda, N):=\big\{ \lambda_m \in V_N :\  \rho(\lambda_m, \lambda) \le (N+1)R \big\}, \end{equation}
and let $\lambda_{m_0}\in A_R(\lambda, N)$ be so chosen that
\begin{equation}\label{lambdam0.def0}
\mu(B(\lambda_{m_0}, N))= \inf_{\lambda_m\in A_R(\lambda, N)} \mu(B(\lambda_m, N)).
\end{equation}
Then we obtain from \eqref {doubling2}, \eqref {maximum2} and \eqref{lambdam0.def0} that
\begin{eqnarray}\label{counting}
 \mu(A_R(\lambda, N))
  & \le &  \frac{ \sum_{\lambda_m\in A_R(\lambda, N)}  \mu(B(\lambda_m, N))}{\mu(B(\lambda_{m_0}, N))}
 = \frac{ \mu\big( \cup_{\lambda_m\in A_R(\lambda, N)} B(\lambda_m, N)\big)}{\mu(B(\lambda_{m_0}, N))}
 \nonumber\\
& \le &
\frac{  \mu(B(\lambda_{m_0}, N+2(N+1)R)}{\mu(B(\lambda_{m_0}, N))}
\le (D(\mu))^3 (R+1)^{\log_2 D(\mu)}.
\end{eqnarray}
 Therefore  the counting measure $\mu$  on the graph ${\mathcal G}$ has {\em strong polynomial growth} since  there exist two positive constants $D$ and $d$ such that
\begin{equation}\label{strongpolynomial-growth}
\sup_{\lambda\in V} \mu\big(\big\{ \lambda_m \in V_N : \ \rho(\lambda_m, \lambda) \le (N+1)R \big\}\big) \le D (R+1)^{d}
\end{equation}
hold for all $R, N\ge 0$ and maximal $N$-disjoint set $V_N$ of fusion vertices.
Recall that  the whole set $V$ is the only  maximal $N$-disjoint set $V_N$ for $N=0$.
So in this paper the minimal constants $d$ and $D$ in \eqref{strongpolynomial-growth}, to be denoted by $\tilde d_{\mathcal G}$ and $\tilde  D_{\mathcal G}$,
are considered as  {\em generalized Beurling dimension} and {\em density} respectively. Moreover
it follows from \eqref{V1.pro} and \eqref{counting} that
\begin{equation}
d_{\mathcal G}\le \tilde d_{\mathcal G}
\le \log_2 D(\mu)
\end{equation}
where $d_{\mathcal G}$ is the Beurling dimension of the graph ${\mathcal G}$.

We say that
the counting measure $\mu$ on the graph ${\mathcal G}$ is {\em Ahlfors $d_0$-regular} if
 there exist   positive constants $B_3$ and $B_4$ such that
 \begin{equation}\label{ahlfors.def}
  B_3 (r+1)^{d_0}\le \mu \big(B(\lambda, r)\big)\le B_4 (r+1)^{d_0}\end{equation}
hold for  all balls $B(\lambda, r)$ with  center $\lambda\in V$ and radius $0\le r \le {\rm diam}\  {\mathcal G}$, where ${\rm diam}\ {\mathcal G}$ denotes the diameter of the graph ${\mathcal G}$
\cite{Keith2008,  YYH13book}.
Clearly for a graph ${\mathcal G}$ with its counting measure $\mu$ being Ahlfors $d_0$-regular,
its  Beurling dimension $d_{\mathcal G}$ is equal to $d_0$.
In the following proposition, we show that the  generalized Beurling dimension  $\tilde d_{\mathcal G}$  is also equal to $d_0$, see Section \ref{regular.prop.pfsection} for the proof.

\begin{prop}\label{regular.prop}
Let ${\mathcal G}$ be a connected simple graph. If
the counting measure $\mu$ is Ahlfors $d_0$-regular, then   $\tilde d_{\mathcal G}=d_0$.
\end{prop}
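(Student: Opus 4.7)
The plan is to prove $\tilde d_{\mathcal G} \geq d_0$ and $\tilde d_{\mathcal G} \leq d_0$ separately. The first inequality follows immediately from the displayed chain $d_{\mathcal G} \leq \tilde d_{\mathcal G} \leq \log_2 D(\mu)$ stated just before the proposition, together with the already-noted fact that Ahlfors $d_0$-regularity forces $d_{\mathcal G} = d_0$ (via the upper half of \eqref{ahlfors.def}).

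For the reverse inequality, I plan to verify \eqref{strongpolynomial-growth} directly with exponent $d = d_0$ and a constant $D$ depending only on $B_3, B_4, d_0$. I would split on $N$. The case $N = 0$ is trivial: by \eqref{V1.pro} one has $V_0 = V$, so the left-hand side of \eqref{strongpolynomial-growth} reduces to $\mu(B(\lambda, R))$ and the bound is just the upper part of \eqref{ahlfors.def}. For $N \geq 1$ the key idea is to exploit the disjointness \eqref{maximum2} of the balls $B(\lambda_m, N)$, $\lambda_m \in V_N$, together with the sharper two-sided control in \eqref{ahlfors.def} (rather than the pure doubling argument used in \eqref{counting}). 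By the triangle inequality one has $\bigcup_{\lambda_m \in A_R(\lambda, N)} B(\lambda_m, N) \subset B(\lambda, N + (N+1)R)$, and combining disjointness with both halves of \eqref{ahlfors.def} gives
\[
B_3 (N+1)^{d_0}\, \mu(A_R(\lambda, N)) \;\le\; \sum_{\lambda_m \in A_R(\lambda, N)} \mu(B(\lambda_m, N)) \;\le\; B_4 \big((N+1)(R+1)\big)^{d_0},
\]
since $N + (N+1)R + 1 = (N+1)(R+1)$. Dividing out the common factor $(N+1)^{d_0}$ yields the uniform bound $\mu(A_R(\lambda, N)) \leq (B_4/B_3)(R+1)^{d_0}$, which is \eqref{strongpolynomial-growth} with $d = d_0$.

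The main bookkeeping obstacle I anticipate is that \eqref{ahlfors.def} is only assumed on the range $0 \leq r \leq {\rm diam}\ {\mathcal G}$, so for finite graphs with small diameter one must replace $B(\lambda, N + (N+1)R)$ by $V$ once the radius exceeds ${\rm diam}\ {\mathcal G}$ and use $\mu(V) \leq B_4({\rm diam}\ {\mathcal G}+1)^{d_0} \leq B_4(R+1)^{d_0}(N+1)^{d_0}$. This is a routine case split that does not affect the resulting exponent, so the final bound still holds with exponent $d_0$ and a constant depending only on $B_3, B_4, d_0$.
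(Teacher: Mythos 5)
Your proof is correct, and half of it follows a genuinely different route from the paper's. The upper-bound half ($\tilde d_{\mathcal G}\le d_0$) is essentially the paper's argument: both you and the authors bound $\mu(A_R(\lambda,N))$ by combining the disjointness \eqref{maximum2} of the balls $B(\lambda_m,N)$ with the inclusion $\bigcup_{\lambda_m\in A_R(\lambda,N)}B(\lambda_m,N)\subset B(\lambda,N+(N+1)R)$ and the two halves of \eqref{ahlfors.def}, arriving at $\mu(A_R(\lambda,N))\le (B_4/B_3)(R+1)^{d_0}$; your explicit handling of $N=0$ and of radii exceeding ${\rm diam}\,{\mathcal G}$ is if anything more careful than the paper, which applies \eqref{ahlfors.def} without comment on that range restriction (and note that when $N>{\rm diam}\,{\mathcal G}$ the lower Ahlfors bound on $\mu(B(\lambda_m,N))$ also needs a word, though then $\#V_N=1$ and the claim is trivial). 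The lower-bound half is where you diverge: the paper proves $\tilde d_{\mathcal G}\ge d_0$ directly by bounding $\mu(A_R(\lambda,N))$ from below, using the finite-overlap property \eqref{overlap-counting} to write $\mu(A_R(\lambda,N))\ge \mu\big(\bigcup_m B(\lambda_m,2N)\big)/\max_m\mu(B(\lambda_m,2N))$, observing that this union contains $B(\lambda,N(R-2))$ for $R\ge 3$, and applying both halves of \eqref{ahlfors.def} to get a bound $\gtrsim(R+1)^{d_0}$ uniformly in $\lambda$, $N$ and $V_N$. You instead piggyback on the displayed chain $d_{\mathcal G}\le\tilde d_{\mathcal G}$ (which is the $N=0$ specialization of \eqref{strongpolynomial-growth}, via \eqref{V1.pro} and \eqref{counting}) together with the paper's stated observation that Ahlfors $d_0$-regularity forces $d_{\mathcal G}=d_0$. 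Your route is shorter and reuses facts already on the page; the paper's route is self-contained at the level of $\mu(A_R(\lambda,N))$ and yields, as a by-product, the two-sided comparability $\mu(A_R(\lambda,N))\approx(R+1)^{d_0}$, which is slightly more information than the bare equality of exponents. Either way the conclusion $\tilde d_{\mathcal G}=d_0$ follows.
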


\subsection{Beurling algebras of matrices on graphs}
\label{beurling.subsection}

Let  ${\mathcal G}:=(V, E)$ be a connected simple graph with its counting measure $\mu$
satisfying the doubling property \eqref{doubling}.
For $1 \le r \le \infty$ and $\alpha \ge 0$, we define the Beurling algebra
${\mathcal B}_{r, \alpha}:={\mathcal B}_{r,\alpha}({\mathcal G})$
by
\begin{equation}\label{beurling.def}
{\mathcal B}_{r, \alpha}({\mathcal G}):=\Big\{A=\big(a(\lambda, \lambda') \big)_{\lambda, \lambda' \in V}:  \  \  \|A\|_{{\mathcal B}_{r, \alpha}}<\infty\Big\},
\end{equation}
where  $d_{\mathcal G}$ is the Beurling dimension of the graph ${\mathcal G}$,  $h_A(n)=\sup_{\rho(\lambda, \lambda')\ge n} |a(\lambda, \lambda')|, n\ge 0$,
and 
\begin{equation}\label{bralpha.norm}
\|A\|_{{\mathcal B}_{r, \alpha}}:=\left\{\begin{array}{ll}
\big(\sum_{n=0}^\infty h_A(n)^r (n+1)^{\alpha r+d_{\mathcal G}-1} \big)^{1/r}  &  {\rm if} \ 1\le  r<\infty\\[5pt]
 \sup_{n\ge 0} h_A(n) (n+1)^\alpha
  & {\rm if} \ r=\infty.
  \end{array}
  \right.
\end{equation}
The Beurling algebra ${\mathcal B}_{r,\alpha}({\mathcal G})$
is introduced in \cite{sunca11} for the lattice graph ${\mathcal Z}^d$ and for
an arbitrary simple graph ${\mathcal G}$ in \cite{shinsun19}.
For a matrix $A=(a(\lambda, \lambda'))_{\lambda, \lambda'\in V}$  in the Beurling algebra $ {\mathcal B}_{r,\alpha}({\mathcal G})$,
we define  approximation matrices $A_K, \ K\ge 1$, with finite bandwidth  by
\begin{equation}\label{defAN}
A_K:=\big( a(\lambda, \lambda')
\chi_{[0,1]}(\rho(\lambda, \lambda')/K)\big)_{\lambda, \lambda'\in V}.
\end{equation}
For the Beurling algebra ${\mathcal B}_{r, \alpha}({\mathcal G})$,  we recall some elementary properties
where  the first four conclusions have been established in \cite{shinsun19}, see Section \ref{beurling.prop.pfsection} for the proof.

\begin{prop}  
\label{beurling.prop}
Let  ${\mathcal G}:=(V, E)$ be a connected simple graph such that its counting measure $\mu$
satisfies the doubling property \eqref{doubling} with the doubling constant  $D(\mu)$.
Then the following  statements hold.

\begin{itemize}
\item[{(i)}] ${\mathcal B}_{r, \alpha}({\mathcal G})$ with $1\le r\le \infty$ and $\alpha\ge 0$ are solid in the sense that
\begin{equation}\label{solid.eq}
\|A\|_{{\mathcal B}_{r, \alpha}}\le \|B\|_{{\mathcal B}_{r, \alpha}}\end{equation}
hold for all $A=(a(\lambda, \lambda'))_{\lambda, \lambda'\in V}$ and $B=(b(\lambda, \lambda'))_{\lambda, \lambda'\in V}$
satisfying
$|a(\lambda, \lambda')|\le |b(\lambda, \lambda')|$ for all $\lambda, \lambda'\in V.$

\item[{(ii)}] ${\mathcal B}_{1, 0}({\mathcal G})$  is  a  Banach algebra, and
\begin{equation}
\|AB\|_{{\mathcal B}_{1, 0}}\le   d_{\mathcal G}
 D_{\mathcal G}  2^{d_{\mathcal G}+1} \|A\|_{{\mathcal B}_{1, 0}} \|B\|_{{\mathcal B}_{1, 0}} \ {\rm for \ all}\  A, B\in {\mathcal B}_{1, 0}({\mathcal G}).
\end{equation}

\item [{(iii)}] ${\mathcal B}_{r, \alpha}({\mathcal G})$ with $1\le r\le \infty$ and $\alpha>d_{\mathcal G}(1-1/r)$
 are  Banach algebras,
 and
 \begin{equation} \label{beurling.prop.eq3}
        \|AB\|_{{\mathcal B}_{r, \alpha}}  
 \le  d_{\mathcal G} D _{\mathcal G}  2^{\alpha +1+d_{\mathcal G}/r}  \Big(\frac{\alpha-(d_{\mathcal G}-1)(1-1/r)}{\alpha-d_{\mathcal G}(1-1/r)}\Big)^{1-1/r}
      \|A\|_{{\mathcal B}_{r, \alpha}}\|B\|_{{\mathcal B}_{r, \alpha}}\ \   {\rm for \ all} \ A, B\in {\mathcal B}_{r, \alpha}({\mathcal G}).
\end{equation}

 \item[{(iv)}]  ${\mathcal B}_{r, \alpha}({\mathcal G})$ with $1\le r\le \infty$ and $\alpha>d_{\mathcal G}(1-1/r)$ are  Banach subalgebras of
${\mathcal B}_{1, 0}({\mathcal G})$, and  
\begin{equation} \label{beurling.prop.eq1}
\|A\|_{{\mathcal B}_{1,0}}
\le \Big( \frac{\alpha -(d_{\mathcal G}-1)(1-1/r)}{\alpha -d_{\mathcal G}(1-1/r)}\Big)^{1-1/r}
\|A\|_{{\mathcal B}_{r,\alpha}} \ {\rm for \ all} \ A\in {\mathcal B}_{r, \alpha}({\mathcal G}).
 \end{equation}

\item[{(v)}] A matrix $A$ in
 ${\mathcal B}_{r, \alpha}({\mathcal G})$ with $1\le r\le \infty$ and $\alpha>d_{\mathcal G}(1-1/r)$
 is well approximated by its truncation $A_K, K\ge 1$, in the norm $\|\cdot\|_{{\mathcal B}_{1, 0}}$,
\begin{equation}\label{AN-appr}
\|A-A_K\|_{{\mathcal B}_{1, 0}} \le  C_0 \|A\|_{{\mathcal B}_{r,\alpha}} K^{-\alpha  +d_{\mathcal G}(1-1/r)},
\end{equation}
where
$$C_0=
 \left\{\begin{array}{ll}
2^{\alpha+1 }
& {\rm if} \ r=1\\
\frac{2^{\alpha+1 -d_{\mathcal G}(1-1/r)}}{(\alpha /(1-1/r)  -d_{\mathcal G})^{1-1/r} } 
 & {\rm if} \ r>1.
\end{array} \right.
$$
%
%

\end{itemize}
\end{prop}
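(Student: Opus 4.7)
The plan is to address the five parts in sequence, noting that the proposition itself credits (i)--(iv) to \cite{shinsun19}; the substantive new content is (v). Part (i) is immediate from the definition of $h_A$ as a supremum: the pointwise bound $|a(\lambda,\lambda')|\le |b(\lambda,\lambda')|$ forces $h_A(n)\le h_B(n)$ for every $n$, and $\|\cdot\|_{{\mathcal B}_{r,\alpha}}$ is a monotone functional of this sequence. For (ii) and (iii) I would use the triangle-inequality splitting: whenever $\rho(\lambda,\lambda')\ge n$, every intermediate vertex $\lambda''$ satisfies $\rho(\lambda,\lambda'')\ge n/2$ or $\rho(\lambda'',\lambda')\ge n/2$, so $|(AB)(\lambda,\lambda')|$ decomposes into two sums in which one of $h_A, h_B$ carries the decay while the counting of the other index is controlled via the polynomial-growth estimate $\mu(B(\lambda,r))\le D_{\mathcal G}(r+1)^{d_{\mathcal G}}$; H\"older with exponents $r,r'$ on the resulting discrete convolution then produces the constants in \eqref{beurling.prop.eq3}. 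For (iv) the key step is the factorization
\[
h_A(n)(n+1)^{d_{\mathcal G}-1}=\bigl[h_A(n)(n+1)^{\alpha+(d_{\mathcal G}-1)/r}\bigr]\cdot (n+1)^{-\alpha+(d_{\mathcal G}-1)/r'},
\]
to which H\"older applies; the hypothesis $\alpha>d_{\mathcal G}(1-1/r)$ is precisely the condition that makes $\sum_n(n+1)^{-\alpha r'+d_{\mathcal G}-1}$ converge, and comparison with the integral $\int_0^\infty(x+1)^{-\alpha r'+d_{\mathcal G}-1}\,dx$ yields the stated constant.

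The core task is (v). Since $A_K$ retains the entries with $\rho(\lambda,\lambda')\le K$ and zeros the rest, the difference $A-A_K$ is supported on $\rho(\lambda,\lambda')>K$, whence
\[
h_{A-A_K}(n)=\begin{cases} h_A(\lfloor K\rfloor+1) & 0\le n\le \lfloor K\rfloor,\\ h_A(n) & n\ge \lfloor K\rfloor+1.\end{cases}
\]
I would split $\|A-A_K\|_{{\mathcal B}_{1,0}}=\sum_n h_{A-A_K}(n)(n+1)^{d_{\mathcal G}-1}$ at $n=\lfloor K\rfloor$ and estimate the two tails separately. For the upper tail I would apply the H\"older factorization from (iv), but restricted to $n>K$; the truncated sum $\sum_{n>K}(n+1)^{-\alpha r'+d_{\mathcal G}-1}$ is controlled by $\int_K^\infty(x+1)^{-\alpha r'+d_{\mathcal G}-1}\,dx=(K+1)^{d_{\mathcal G}-\alpha r'}/(\alpha r'-d_{\mathcal G})$, which produces the factor $(K+1)^{-\alpha+d_{\mathcal G}(1-1/r)}$ together with the denominator $(\alpha/(1-1/r)-d_{\mathcal G})^{1-1/r}$ once one uses $1/r'=1-1/r$ and $\alpha r'=\alpha/(1-1/r)$.

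For the lower tail I would first control the value $h_A(\lfloor K\rfloor+1)$ itself via monotonicity of $h_A$: the inequality $h_A(\lfloor K\rfloor+1)^r\sum_{n=0}^{\lfloor K\rfloor}(n+1)^{\alpha r+d_{\mathcal G}-1}\le \|A\|_{{\mathcal B}_{r,\alpha}}^r$ yields $h_A(\lfloor K\rfloor+1)\lesssim \|A\|_{{\mathcal B}_{r,\alpha}}(K+1)^{-\alpha-d_{\mathcal G}/r}$, and multiplying by $\sum_{n=0}^{\lfloor K\rfloor}(n+1)^{d_{\mathcal G}-1}\le (K+1)^{d_{\mathcal G}}/d_{\mathcal G}$ gives once again a bound of order $(K+1)^{-\alpha+d_{\mathcal G}(1-1/r)}$. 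The case $r=1$ collapses: the H\"older step disappears, and one simply factors $(K+1)^{-\alpha}$ out of $\sum_{n>K}h_A(n)(n+1)^{d_{\mathcal G}-1}\le (K+1)^{-\alpha}\|A\|_{{\mathcal B}_{1,\alpha}}$, explaining the constant $2^{\alpha+1}$.

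The main obstacle is not conceptual but purely bookkeeping: adding the two tail contributions and matching the exact constant $C_0$ stated in \eqref{AN-appr} as a function of $r$, $\alpha$, and $d_{\mathcal G}$, in particular tracking the prefactor $2^{\alpha+1-d_{\mathcal G}(1-1/r)}$ that arises from comparing sums starting at $n=\lfloor K\rfloor$ versus the integral lower endpoint $K$, and from transferring the small-$n$ contribution into the same power of $K+1$ as the large-$n$ contribution.
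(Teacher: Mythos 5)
Your proposal matches the paper's proof in both structure and substance: (i) is immediate, (ii)--(iv) are quoted from the cited reference, and (v) is proved by the same H\"older-plus-integral-comparison estimate on the tail $\sum_{n\gtrsim K}h_A(n)(n+1)^{d_{\mathcal G}-1}$, with the convergence condition $\alpha r'>d_{\mathcal G}$ (i.e.\ $\alpha>d_{\mathcal G}(1-1/r)$) doing all the work. The only, immaterial, difference is in the near-diagonal block $n\le \lfloor K\rfloor$: the paper absorbs it into a tail sum starting at $\lceil (K+1)/2\rceil$ using monotonicity of $h_A$ and a factor $2$ before applying H\"older once, whereas you bound $h_A(\lfloor K\rfloor+1)$ separately by a Chebyshev-type argument and multiply by the partial sum of $(n+1)^{d_{\mathcal G}-1}$; both routes produce the same power $K^{-\alpha+d_{\mathcal G}(1-1/r)}$, differing only in the exact value of the constant $C_0$, which you correctly flag as bookkeeping.
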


\subsection{Beurling algebras of matrices on a maximal disjoint set of fusion vertices}
\label{beurlingmaximal.subsection}

Given  $1\le r\le \infty, \tilde \alpha\ge 0$  and a maximal $N$-disjoint set $V_N$ of fusion vertices, we define
  Beurling algebras  of matrices $B:=\big(b(\lambda_m, \lambda_k)\big)_{\lambda_m, \lambda_k\in V_N} $ on
$V_N$ by
\begin{equation}
\mathcal{B}_{r,\tilde \alpha ;N}(V_N):=\big\{B, \
\|B\|_{\mathcal{B}_{r,\tilde \alpha;N}}<\infty\big\}
\end{equation}
where
\begin{equation}\label{N-norm}
\|B\|_{\mathcal{B}_{r,\tilde \alpha;N}}:= \left\{\begin{array}{ll}
\Big(\sum_{n=0}^\infty (n+1)^{\tilde \alpha r+\tilde d_{\mathcal G}-1} \Big(\sup_{\rho(\lambda_m, \lambda_k)\ge n (N+1)}
|b(\lambda_m, \lambda_k )|\Big)^r\Big)^{1/r}  & {\rm if} \ 1\le r<\infty\\
\sup_{n\ge 0} (n+1)^{\tilde \alpha } \big(\sup_{\rho(\lambda_m, \lambda_k)\ge n  (N+1)}
|b(\lambda_m, \lambda_k )| \big)  & {\rm if} \ r=\infty.\end{array}
\right.
\end{equation}
 The Banach algebra $\mathcal{B}_{r,\tilde \alpha;N}(V_N)$  is
 introduced in \cite{shinsun19}, where the counting measure $\mu$ is  assumed to be Ahlfors regular
 in which the generalized Beurling dimension  $\tilde d_{\mathcal G}$ and the Beurling dimension $d_{\mathcal G}$ coincides by
 Proposition \ref{regular.prop}.  Following the
  argument used in the proof of  Proposition  \ref{beurling.prop}
 with the polynomial growth property \eqref{polynomial-growth} replaced by the strong polynomial growth property
\eqref{strongpolynomial-growth}, we have the following properties for Banach algebras ${\mathcal B}_{r, \tilde \alpha; N}(V_N)$
of matrices on $V_N$.

\begin{prop}\label{VNbanach.prop}
Let  ${\mathcal G}:=(V, E)$ be a connected simple graph such that its counting measure $\mu$
satisfies the doubling property \eqref{doubling},
 and
$V_N$ be a maximal $N$-disjoint set of fusion vertices.
Then the following statements hold.

\begin{itemize}

\item[{(i)}] $\mathcal{B}_{1,0;N}(V_N)$ is a Banach algebra and
\begin{equation}\label{banach-algebra}
\|AB\|_{\mathcal{B}_{1,0;N}}\le  \tilde d_{\mathcal G} \tilde D_{\mathcal G}   2^{3\tilde d_{\mathcal G}+1} \|A\|_{\mathcal{B}_{1,0;N}}\|B\|_{\mathcal{B}_{1,0;N}}, \ A, B\in {\mathcal B}_{1, 0; N}(V_N).
\end{equation}

\item[{(ii)}]  ${\mathcal B}_{r, \tilde \alpha; N}(V_N)$ with $1\le r\le \infty$ and $\tilde \alpha > \tilde d_{\mathcal G} (1-1/r)$ are Banach subalgebras of
${\mathcal B}_{1, 0; N}(V_N)$, and
\begin{equation}\label{banach-algebra+2}
\|A\|_{\mathcal{B}_{1,0 ;N}}\le
\Big( \frac{\tilde \alpha -(\tilde d_{\mathcal G}-1)(1-1/r)}{\tilde \alpha -
\tilde d_{\mathcal G}(1-1/r)}\Big)^{1-1/r}
\|A\|_{\mathcal{B}_{r,\tilde \alpha;N}},  A\in {\mathcal B}_{r, \tilde \alpha; N}(V_N).
\end{equation}

\item [{(iii)}] ${\mathcal B}_{r, \tilde \alpha ;N}(V_N)$ with $1\le r\le \infty$ and $\tilde  \alpha> \tilde d_{\mathcal G} (1-1/r)$
are Banach algebras,  and
\begin{eqnarray}\label{banach-algebra+1}
\|AB\|_{\mathcal{B}_{r,\tilde \alpha ;N}}&\le &
\tilde d_{\mathcal G}
\tilde D_{\mathcal G}
2^{\tilde \alpha+ \tilde d_{\mathcal G}(2+1/r)+2}\Big( \frac{\tilde \alpha -(\tilde d_{\mathcal G}-1)(1-1/r)}{\tilde\alpha -
\tilde d_{\mathcal G}(1-1/r)}\Big)^{1-1/r}
\nonumber \\&  &
\times \|A\|_{\mathcal{B}_{r,\tilde \alpha;N}}\|B\|_{\mathcal{B}_{r, \tilde \alpha;N}},
\ \ A, B\in
 {\mathcal B}_{r, \tilde \alpha; N}(V_N).
\end{eqnarray}
\end{itemize}
\end{prop}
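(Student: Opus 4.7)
The plan is to follow the proof of Proposition \ref{beurling.prop} essentially line by line, making the following substitutions throughout: the graph distance $\rho$ is everywhere measured in units of $N+1$, the polynomial growth property \eqref{polynomial-growth} is replaced by the strong polynomial growth \eqref{strongpolynomial-growth}, and the Beurling dimension $d_{\mathcal G}$ is replaced by the generalized Beurling dimension $\tilde d_{\mathcal G}$. The translation is clean for two reasons. First, distinct vertices of $V_N$ are at least $2N+1$ apart by \eqref{maximum2}, so the integer variable $n=\lfloor\rho(\lambda_m,\lambda_k)/(N+1)\rfloor$ appearing in \eqref{N-norm} plays exactly the role that $\rho(\lambda,\lambda')$ does in \eqref{bralpha.norm}. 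Second, the bound $\tilde D_{\mathcal G}(R+1)^{\tilde d_{\mathcal G}}$ in \eqref{strongpolynomial-growth} is \emph{uniform in} $N$, which is precisely why no $(N+1)$-factor appears in the stated constants of \eqref{banach-algebra}--\eqref{banach-algebra+1}.

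For part (ii) I would apply H\"older's inequality with conjugate exponents $r$ and $r/(r-1)$ to the series defining $\|A\|_{\mathcal{B}_{1,0;N}}$, splitting the weight as
\begin{equation*}
(n+1)^{\tilde d_{\mathcal G}-1}=(n+1)^{\tilde\alpha+(\tilde d_{\mathcal G}-1)/r}\cdot(n+1)^{(\tilde d_{\mathcal G}-1)(1-1/r)-\tilde\alpha}.
\end{equation*}
The first factor pairs with the tail supremum to reproduce $\|A\|_{\mathcal{B}_{r,\tilde\alpha;N}}$, and the assumption $\tilde\alpha>\tilde d_{\mathcal G}(1-1/r)$ is exactly the convergence condition for the auxiliary series $\sum_{n\ge 0}(n+1)^{\tilde d_{\mathcal G}-1-\tilde\alpha r/(r-1)}$. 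A direct integral comparison evaluates this series to produce the constant $\bigl((\tilde\alpha-(\tilde d_{\mathcal G}-1)(1-1/r))/(\tilde\alpha-\tilde d_{\mathcal G}(1-1/r))\bigr)^{1-1/r}$ stated in \eqref{banach-algebra+2}.

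For the multiplicative estimates in (i) and (iii), let $C=AB$ and write $g_C(n):=\sup_{\rho(\lambda_m,\lambda_k)\ge n(N+1)}|c(\lambda_m,\lambda_k)|$. I would bound the defining sum
\begin{equation*}
|c(\lambda_m,\lambda_k)|\le\sum_{\lambda_j\in V_N}|a(\lambda_m,\lambda_j)||b(\lambda_j,\lambda_k)|
\end{equation*}
by splitting it at the midpoint $\rho(\lambda_m,\lambda_k)/2$: in one piece $\rho(\lambda_m,\lambda_j)\ge n(N+1)/2$, allowing us to factor out $g_A(\lceil n/2\rceil)$, while in the other $\rho(\lambda_j,\lambda_k)\ge n(N+1)/2$ and we factor out $g_B(\lceil n/2\rceil)$. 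The residual inner sums are then handled by decomposing $V_N$ into annuli $\{k(N+1)\le\rho<(k+1)(N+1)\}$ and applying \eqref{strongpolynomial-growth} to bound the cardinality of each annulus by $\tilde D_{\mathcal G}(k+2)^{\tilde d_{\mathcal G}}$. Taking the weighted $\ell^r$-norm of the resulting convolution-type estimate in $n$ and invoking part (ii) on the residual sums yields \eqref{banach-algebra+1}; the boundary case (i), where $\tilde\alpha=0$ puts (ii) out of reach, admits a direct weighted $\ell^1$ version of the same splitting, with the single application of \eqref{strongpolynomial-growth} producing the factor $\tilde d_{\mathcal G}\tilde D_{\mathcal G}$ in \eqref{banach-algebra}.

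The main technical obstacle will be bookkeeping the numerical constants so that they come out in exactly the stated closed form. The factor $\tilde d_{\mathcal G}\tilde D_{\mathcal G}$ reflects a single integral comparison against \eqref{strongpolynomial-growth}; the power $2^{\tilde\alpha+\tilde d_{\mathcal G}(2+1/r)+2}$ collects contributions from passing $g_A(\lceil n/2\rceil)$ back to $g_A(n)$ on doubled indices (giving $2^{\tilde\alpha}$), from rewriting annular cardinality bounds as weighted sums after a change of variable ($2^{\tilde d_{\mathcal G}(2+1/r)}$-type contributions), and from invoking the embedding constant of (ii) once while closing the estimate. Everything else is a faithful transcription of the argument used for Proposition \ref{beurling.prop} under the indicated substitutions.
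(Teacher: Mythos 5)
Your proposal matches the paper's treatment: the paper proves this proposition precisely by rerunning the argument of Proposition \ref{beurling.prop} (itself the annular-decomposition/Schur-type argument from \cite{shinsun19}) with $\rho$ rescaled by $N+1$, the polynomial growth bound \eqref{polynomial-growth} replaced by the strong polynomial growth bound \eqref{strongpolynomial-growth}, and $d_{\mathcal G}, D_{\mathcal G}$ replaced by $\tilde d_{\mathcal G}, \tilde D_{\mathcal G}$, and your H\"older computation for (ii) and midpoint splitting for (i), (iii) are exactly the right details. The one slip is that the triangle inequality only yields $\rho\ge \lfloor n/2\rfloor (N+1)$ for odd $n$, so the factored-out tail supremum should be $g_A(\lfloor n/2\rfloor)$ rather than $g_A(\lceil n/2\rceil)$; this costs nothing beyond the powers of $2$ already present in the stated constants.
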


Beurling algebra on the graph ${\mathcal G}$ and on its maximal  $N$-disjoint set $V_N$ of fusion vertices are closely related.
For $N=0$, we have
\begin{equation}
 {\mathcal B}_{r, \tilde \alpha; 0}(V_0)={\mathcal B}_{r, \tilde \alpha+ (\tilde d_{\mathcal G}-d_{\mathcal G})/r}({\mathcal G})\end{equation}
  as the only maximal $0$-disjoint set  $V_0$ is the whole vertex set $V$.
  For $N\ge 1$,  we have the following results about  Beurling algebras on a graph and its maximal disjoint sets, which will be used
  in our proofs to establish the  equivalence of weighted stability for different exponents and weights  and also
  the norm-controlled inversion.   The detailed proof 
   will be given in Section \ref{beurlingonmaximalsets.pr.pfsection}.

  \begin{prop}\label{beurlingonmaximalsets.pr}
  Let $1\le r\le \infty$,   ${\mathcal G}:=(V, E)$ be a connected simple graph such that its counting measure $\mu$
satisfies the doubling property \eqref{doubling},
 and
$V_N, N\ge 1$, be a maximal $N$-disjoint set of fusion vertices.
Then the following statements hold.
\begin{itemize}
\item[{(i)}] If $A=(a(\lambda, \lambda'))_{\lambda, \lambda'\in V}\in {\mathcal B}_{r, \alpha} ({\mathcal G}), \alpha \ge 0$, then
its submatrix
  $B= (a(\lambda_m, \lambda_k))_{\lambda_m, \lambda_k\in V_N}$ belongs to ${\mathcal B}_{r, \alpha - (\tilde d_{\mathcal G}-d_{\mathcal G})/r; N}$, and
  \begin{equation}\label{beurlingonmaximalsets.pr.eq1}
  \|B\|_{{\mathcal B}_{r, \alpha - (\tilde d_{\mathcal G}-d_{\mathcal G})/r; N}}\le \|A\|_{{\mathcal B}_{r, \alpha}}.
  \end{equation}

  \item[{(ii)}] If $B= (b(\lambda_m, \lambda_k))_{\lambda_m, \lambda_k\in V_N}\in {\mathcal B}_{r, \alpha; N}, \alpha\ge 0$, the matrix
    \begin{equation}\label{beurlingonmaximalsets.pr.eq2}
    A=\Big(\sum_{\lambda_m\in B(\lambda, 2N)} \sum_{\lambda_k\in B(\lambda', 4N)} b(\lambda_m, \lambda_k)\Big)_{\lambda, \lambda'\in V}
    \end{equation}
    on the graph ${\mathcal G}$ belongs to  $B_{r, \alpha+ (\tilde d_{\mathcal G}-d_{\mathcal G})/r}({\mathcal G})$,
    and
    \begin{equation} \label{beurlingonmaximalsets.pr.eq3}
    \|A\|_{{\mathcal B}_{r, \alpha+ (\tilde d_{\mathcal G}-d_{\mathcal G})/r}}\le 8^{\alpha +\tilde d_{\mathcal G}/r} (D(\mu))^{7} N^{\alpha +\tilde d_{\mathcal G}/r}
     \|B\|_{{\mathcal B}_{r, \alpha; N}}.
    \end{equation}

\item[{(iii)}] If $A=(a(\lambda, \lambda'))_{\lambda, \lambda'\in V}\in {\mathcal B}_{r, \alpha} ({\mathcal G})$ for some $\alpha>  d_{\mathcal G}(1-1/r)$, then
the matrix
\begin{equation}
S_{A,N}=\big(S_{A, N}(\lambda_m, \lambda_k)\big)_{\lambda_m, \lambda_k\in V_N},\end{equation}
belongs to ${\mathcal B}_{r, \alpha-(\tilde d_{\mathcal G}-d_{\mathcal G})/r; N}$, and
\begin{equation}\label{SAN.estimate}
\|S_{A, N}\|_{\mathcal{B}_{r, \alpha-(\tilde d_{\mathcal G}-d_{\mathcal G})/r;N}}\le
\tilde C_0
 \|{A}\|_{{\mathcal B}_{r,\alpha}}\times \left\{\begin{array}{ll} N^{-\min(1,\alpha-d_{\mathcal G}/r')} & {\rm if} \ \alpha\ne d_{\mathcal G}/r'+1\\
N^{-1} (\ln (N+1))^{1/r'}  & {\rm if} \ \alpha= d_{\mathcal G}/r'+1,
\end{array}\right.
\end{equation}
where  $1/r'=1-1/r$,  $h_A(n)=\sup_{\rho(\lambda, \lambda')\ge n} |a(\lambda, \lambda')|, n\ge 0$,
\begin{equation*}
\tilde C_0=  
2^{4\alpha+4d_{\mathcal G}/r+2} 
\times
\left\{
\begin{array}{ll}
\big(\frac{1+|\alpha -1-d_{\mathcal G}/r'|}{|\alpha-1-d_{\mathcal G}/r'|}\big)^{1/r'}
& {\rm if} \ \alpha\ne d_{\mathcal G}/r'+1
\\
1 & {\rm if} \ \alpha= d_{\mathcal G}/r'+1,
\end{array}\right.
\end{equation*}
 and  for $\lambda_m, \lambda_k\in V_N$,
\begin{equation}\label{VAN-1}
S_{ A, N}(\lambda_m, \lambda_k)= \left\{
\begin{array}{ll}
N^{d_{\mathcal G}}  h_{ A}\big(\rho(\lambda_m, \lambda_k)/2\big)
&{\rm if }\ \rho(\lambda_m, \lambda_k)> 12(N+1)
\\
N^{-1} \sum_{n=0}^{2N} h_A(n) (n+1)^{d_{\mathcal G}}
&
{\rm if }\ \rho(\lambda_m, \lambda_k)\le  12(N+1).
\end{array}
\right.
\end{equation}
 \end{itemize}
  \end{prop}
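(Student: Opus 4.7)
The plan is to establish the three parts in turn. All three rest on the observation that the parameter shift $(\tilde d_{\mathcal G} - d_{\mathcal G})/r$ is chosen precisely so that, once applied with the appropriate sign, the weight appearing in the $V_N$-Beurling norm reduces exactly to $r\alpha + d_{\mathcal G} - 1$, the same weight that appears in the graph Beurling norm $\|A\|_{{\mathcal B}_{r,\alpha}}$. All subsequent comparisons between the two kinds of norms then become straightforward $\ell^r$ estimates against this common weight.

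For part (i), the entries of $B$ coincide with those of $A$, so the monotonicity of $h_A$ gives $\sup_{\rho(\lambda_m,\lambda_k)\ge n(N+1)} |b(\lambda_m,\lambda_k)| \le h_A(n(N+1)) \le h_A(n)$; plugging this into the norm definition delivers \eqref{beurlingonmaximalsets.pr.eq1} directly. For part (ii), I would combine the triangle inequality $\rho(\lambda_m,\lambda_k) \ge \rho(\lambda,\lambda') - 6N$ (valid whenever $\lambda_m \in B(\lambda,2N)\cap V_N$ and $\lambda_k \in B(\lambda',4N)\cap V_N$) with the strong polynomial growth \eqref{strongpolynomial-growth} to bound the cardinalities of these intersections by a constant depending only on $\tilde d_{\mathcal G}$ and $\tilde D_{\mathcal G}$. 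Writing $H_B(m) := \sup_{\rho(\lambda_m,\lambda_k)\ge m} |b(\lambda_m,\lambda_k)|$, this yields $h_A(n) \lesssim H_B(\max(0,n-6N))$. I then split $\|A\|_{{\mathcal B}_{r,\alpha+(\tilde d_{\mathcal G}-d_{\mathcal G})/r}}^r$ at $n = 6N$: the head is dominated by $H_B(0) \le \|B\|_{{\mathcal B}_{r,\alpha;N}}$ times a geometric constant, and in the tail I substitute $m = n-6N$ and group $m$ into blocks of width $N+1$ so that $H_B(m)$ matches the $V_N$-norm block by block. Combining both pieces and tracking the prefactors produces the $N^{\alpha + \tilde d_{\mathcal G}/r}$ scaling and the constant in \eqref{beurlingonmaximalsets.pr.eq3}.

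Part (iii) is the core estimate. Writing $\Sigma_n := \sup_{\rho(\lambda_m,\lambda_k) \ge n(N+1)} S_{A,N}(\lambda_m,\lambda_k)$, I split $\|S_{A,N}\|_{{\mathcal B}_{r,\alpha-(\tilde d_{\mathcal G}-d_{\mathcal G})/r;N}}^r = \sum_{n\ge 0}(n+1)^{\alpha r + d_{\mathcal G} - 1}\Sigma_n^r$ at $n = 12$, mirroring the $\rho = 12(N+1)$ threshold in the definition of $S_{A,N}$. For $n > 12$, $\Sigma_n = N^{d_{\mathcal G}} h_A(n(N+1)/2)$ by monotonicity, and comparing the samples $h_A(n(N+1)/2)^r$ (spaced by $(N+1)/2$) with the full series $\|A\|_{{\mathcal B}_{r,\alpha}}^r = \sum_m h_A(m)^r (m+1)^{\alpha r + d_{\mathcal G} - 1}$ via the non-increasingness of $h_A$ gives a bound of order $N^{d_{\mathcal G} r - \alpha r - d_{\mathcal G}}\|A\|_{{\mathcal B}_{r,\alpha}}^r$, i.e., $N^{-(\alpha - d_{\mathcal G}/r')}\|A\|_{{\mathcal B}_{r,\alpha}}$ after taking $r$-th roots. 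For $n \le 12$ the outer factor is a bounded constant and $\Sigma_n \le N^{-1}\sum_{k=0}^{2N} h_A(k)(k+1)^{d_{\mathcal G}}$; I apply H\"older's inequality with the split $h_A(k)(k+1)^{d_{\mathcal G}} = \big[h_A(k)(k+1)^{\alpha + (d_{\mathcal G}-1)/r}\big]\cdot\big[(k+1)^{d_{\mathcal G} - \alpha - (d_{\mathcal G}-1)/r}\big]$, the first bracket summing to $\|A\|_{{\mathcal B}_{r,\alpha}}$ and the second producing the finite series $\big(\sum_{k=0}^{2N}(k+1)^{r'(1-\alpha)+d_{\mathcal G}-1}\big)^{1/r'}$. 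The asymptotic of this last series in $N$ has three regimes, governed by the sign of $r'(1-\alpha)+d_{\mathcal G}$, i.e., by whether $\alpha$ is less than, equal to, or greater than $1 + d_{\mathcal G}/r'$, and these yield exactly the three cases in \eqref{SAN.estimate}: $N^{-(\alpha - d_{\mathcal G}/r')}$, the logarithmic boundary $N^{-1}(\ln(N+1))^{1/r'}$, and $N^{-1}$.

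The main obstacle is the bookkeeping in (iii): the constant $\tilde C_0$ must simultaneously absorb the strong-polynomial-growth factors, the geometric factor $2^{\alpha r + d_{\mathcal G}}$ arising from comparing $(n+1)^{\alpha r + d_{\mathcal G} - 1}$ with $(n(N+1)/2)^{\alpha r + d_{\mathcal G} - 1}$, and the H\"older prefactor $|\alpha - 1 - d_{\mathcal G}/r'|^{-1/r'}$, which blows up at the borderline $\alpha = 1 + d_{\mathcal G}/r'$ and forces the separate logarithmic case. Once the three-regime dichotomy is recognized as the natural output of the finite geometric series, the remaining work is a careful accounting of prefactors.
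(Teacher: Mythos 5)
Your proposal is correct and follows essentially the same route as the paper's proof: part (i) from monotonicity of $h_A$ and the exponent cancellation $\tilde\alpha r+\tilde d_{\mathcal G}-1=\alpha r+d_{\mathcal G}-1$; part (ii) by reindexing over blocks of length $N+1$, the bound $\rho(\lambda_m,\lambda_k)\ge \rho(\lambda,\lambda')-6N$, and the finite-overlap count of fusion vertices (the paper uses \eqref{overlap-counting} to get the stated $(D(\mu))^7$, a cosmetic difference from your use of \eqref{strongpolynomial-growth}); and part (iii) by splitting at the $12(N+1)$ threshold, comparing the sampled values $h_A(n(N+1)/2)$ with the full Beurling sum for the far part, and applying exactly the same H\"older factorization with the three-regime analysis of $\sum_{k\le 2N}(k+1)^{-(\alpha-1)r'+d_{\mathcal G}-1}$ for the near part. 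The only loose end is the routine adaptation to $r=\infty$, which the paper likewise dispatches with a one-line remark.
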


\subsection{Weighted norm inequalities}
\label{weighted.subsection}
Let  ${\mathcal G}:=(V, E)$ be a connected simple graph with its counting measure $\mu$
satisfying the doubling property \eqref{doubling}.
For $1\le p<\infty$, a positive function $w=(w(\lambda))_{\lambda\in V}$ on the vertex set $V$ is a  {\em Muckenhoupt $A_p$-weight}  if
there exists a positive constant $C$ such that 
\begin{equation}\label{Aq-weight1}
\Big( \frac{1}{\mu(B)} \sum_{\lambda \in B} w(\lambda)\Big)
\Big( \frac{1}{\mu(B)} \sum_{\lambda \in B} \big(w(\lambda)\big)^{-1/(p-1)}\Big)^{p-1}
\le C
\end{equation}
for $1<p<\infty$, and
a Muckenhoupt $A_1$-weight if 
\begin{equation}\label{Aq-weight2}
\frac{1}{\mu(B)} \sum_{\lambda \in B} w(\lambda) \le C \inf_{\lambda \in B} w(\lambda)
\end{equation}
for any ball $B\subset V$ \cite{garciabook}.
The smallest constant $C$ for which
\eqref{Aq-weight1} holds for $1<p<\infty$, and
\eqref{Aq-weight2} holds for $p=1$, respectively
is known as the  $A_p$-bound of the weight $w$ and is denoted by $A_p(w)$.
An equivalent definition of  a Muckenhoupt $A_p$-weight
 $w:=(w(\lambda))_{\lambda\in V}$ is that
\begin{equation}\label{Aq-char}
\Big( \frac{1}{\mu(B)} \sum_{\lambda \in B} |c(\lambda)|\Big)^p
\Big( \frac{1}{\mu(B)} \sum_{\lambda \in B} w(\lambda)\Big)
\le \frac{A_p(w)} {\mu(B)} \sum_{\lambda \in B} |c(\lambda)|^p w(\lambda)
\end{equation}
holds for all balls $B\subset V$ and sequences $c:=\big(c(\lambda)\big)_{\lambda\in V}\in \ell^p_w$, where
$A_p(w)$ is
$A_p$-bound  of the weight $w$.
For $\lambda\in V$ and $r\ge 0$, set
$$w(B(\lambda, r))=\sum_{\lambda'\in B(\lambda, r)} w(\lambda').$$
It is well known that a Muckenhoupt $A_p$-weight  $w$ is a doubling measure. In fact, replacing the ball   $B$  and the sequence $c$ by $B(\lambda, 2^jr), 1\le j\in \ZZ$ and
the index sequence on $B(\lambda, r)$ in \eqref{Aq-char} and using the doubling condition
\eqref{doubling} for the counting measure $\mu$, we obtain that
\begin{equation}\label{weighteddoubling}
w(B(\lambda, 2^jr))\le  A_p(w) \Big(\frac{\mu(B(\lambda, 2^jr)}{\mu(B(\lambda, r)}\Big)^p w(B(\lambda, r))\le
(D(\mu))^{jp} A_p(w) w(B(\lambda, r))
\end{equation}
hold for all $\lambda\in V, r\ge 0$ and positive integers $j$.

Weighted norm inequalities of linear operators are an important topic in harmonic analysis, see \cite{garciabook} and references therein
for historical remarks.
In the following proposition, we show that the Banach algebra ${\mathcal B}_{1, 0}({\mathcal G})$
 is
a Banach subalgebra of ${\mathcal B}(\ell^p_w)$, see Section \ref{weighted.prop.pfsection} for the proof.

\begin{prop}\label{weighted.prop}
Let  ${\mathcal G}:=(V, E)$ be a connected simple graph such that its counting measure $\mu$
satisfies the doubling property \eqref{doubling}.
Then
${\mathcal B}_{1,0}({\mathcal G})$  is a subalgebra of ${\mathcal B}(\ell_w^p)$ for any $1\le p<\infty$ and Muckenhoupt $A_p$-weight $w$,
and
\begin{equation}\label{continuity}
\|Ac\|_{p,w} \le  2^{3d_{\mathcal G}}   D_{\mathcal G}  (A_p(w))^{1/p}
\|A\|_{{\mathcal B}_{1,0}} \|c\|_{p, w}
\ \ {\rm for \ all} \  A\in {\mathcal B}_{1, 0}({\mathcal G})\ {\rm and}\  c \in \ell_w^p.
\end{equation}
\end{prop}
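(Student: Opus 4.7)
\medskip

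The plan is to control the matrix action $A$ pointwise by a ball-averaging operator and then invoke the Muckenhoupt condition \eqref{Aq-char} to pass to $L^p_w$. The basic mechanism is that $|a(\lambda,\lambda')|\le h_A(\rho(\lambda,\lambda'))$ with $h_A$ non-increasing, so $A$ is dominated by a ``radial'' convolution-type kernel, and such kernels can be decomposed via a layer-cake / telescoping identity into a weighted sum of indicator operators $c\mapsto \sum_{\lambda'\in B(\lambda,k)}|c(\lambda')|$.

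First, since $\|A\|_{{\mathcal B}_{1,0}}<\infty$ forces $h_A(n)\to 0$, I would write
\begin{equation*}
h_A(\rho(\lambda,\lambda'))=\sum_{k\ge \rho(\lambda,\lambda')}\bigl(h_A(k)-h_A(k+1)\bigr),
\end{equation*}
and interchange summation to obtain the pointwise bound
\begin{equation*}
|(Ac)(\lambda)|\le \sum_{k=0}^{\infty}\bigl(h_A(k)-h_A(k+1)\bigr)\sum_{\lambda'\in B(\lambda,k)}|c(\lambda')|.
\end{equation*}
This replaces the matrix $A$ by a telescoped cascade of ball-averages, and it is the right object for the weighted theory because \eqref{Aq-char} directly controls exactly such sums.

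Second, for each $k$ I would apply the Muckenhoupt characterization \eqref{Aq-char} to the inner sum, yielding
\begin{equation*}
\sum_{\lambda'\in B(\lambda,k)}|c(\lambda')|\le (A_p(w))^{1/p}\,\mu(B(\lambda,k))\,\Bigl(\frac{1}{w(B(\lambda,k))}\sum_{\lambda'\in B(\lambda,k)}|c(\lambda')|^p w(\lambda')\Bigr)^{1/p}.
\end{equation*}
Raising the pointwise bound to the $p$-th power, applying Jensen against the discrete measure $(h_A(k)-h_A(k+1))\mu(B(\lambda,k))\,dk$, and integrating against $w(\lambda)$ then switching the order of summation via Fubini, the analysis reduces to estimating the $\lambda$-sum
\begin{equation*}
\sum_{\lambda\in B(\lambda',k)}\frac{\mu(B(\lambda,k))\,w(\lambda)}{w(B(\lambda,k))},
\end{equation*}
which I would bound by $\mu(B(\lambda',k))$ (up to doubling constants) using the doubling of $\mu$ in \eqref{doubling} and, if needed, the weighted doubling \eqref{weighteddoubling} to compare $w(B(\lambda,k))$ with $w(B(\lambda',k))$.

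Third, after this reduction the remaining task is a purely combinatorial estimate on the kernel side: one must bound $\sum_{k}(h_A(k)-h_A(k+1))(k+1)^{d_{\mathcal G}}$ by a constant times $\|A\|_{{\mathcal B}_{1,0}}$. This is exactly what Abel summation provides, turning the $(k+1)^{d_{\mathcal G}}$ weight into $(k+1)^{d_{\mathcal G}}-k^{d_{\mathcal G}}\le d_{\mathcal G}(k+1)^{d_{\mathcal G}-1}$, and matching the definition \eqref{bralpha.norm} of $\|A\|_{{\mathcal B}_{1,0}}$. The polynomial growth \eqref{polynomial-growth} supplies the factor $D_{\mathcal G}$, and the dyadic overhead contributes the $2^{3d_{\mathcal G}}$.

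The main obstacle is careful constant tracking, especially to get the exact power $(A_p(w))^{1/p}$: the Muckenhoupt step introduces one factor of $(A_p(w))^{1/p}$ naturally, and one has to organize the Fubini/doubling step so that no additional factor of $A_p(w)$ is incurred beyond what the prefactor $2^{3d_{\mathcal G}}D_{\mathcal G}$ can absorb. Ensuring the endpoint case $p=1$ (where \eqref{Aq-weight2} replaces \eqref{Aq-char} and one gets a clean $A_1(w)$) meshes with the general $p$ argument is the other place where some care is needed.
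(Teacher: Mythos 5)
Your strategy is sound and is essentially the one the paper uses, up to how the kernel is sliced: the paper decomposes $h_A(\rho(\lambda,\lambda'))$ over dyadic annuli $2^{l-1}\le\rho(\lambda,\lambda')<2^l$ and applies H\"older with respect to the measure $h_A(2^{l-1})2^{ld_{\mathcal G}}$, whereas you telescope $h_A$ into nested balls via Abel summation; both reduce to the same ball-average operators, and both back ends (Jensen/H\"older over the kernel measure, Fubini, polynomial growth, and a summation-by-parts to recover $\|A\|_{{\mathcal B}_{1,0}}$) go through. The one step you flag but do not resolve is the real crux, so let me pin it down: if, after Fubini, you compare the denominator $w(B(\lambda,k))$ with $w(B(\lambda',k))$ by invoking the weighted doubling inequality \eqref{weighteddoubling}, you pick up a second factor of $A_p(w)$ and end with $(A_p(w))^{2/p}$ instead of the claimed $(A_p(w))^{1/p}$. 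The paper avoids this by applying the $A_p$ condition on an enlarged ball before Fubini: since $\sum_{\lambda''\in B(\lambda,k)}w(\lambda'')^{-1/(p-1)}\le\sum_{\lambda''\in B(\lambda,2k)}w(\lambda'')^{-1/(p-1)}$, one bounds $\bigl(\sum_{\lambda''\in B(\lambda,k)}w(\lambda'')^{-1/(p-1)}\bigr)^{p-1}\le A_p(w)\,\bigl(\mu(B(\lambda,2k))\bigr)^p/w(B(\lambda,2k))$, and for $\lambda\in B(\lambda',k)$ one has $B(\lambda',k)\subset B(\lambda,2k)$, hence $w(B(\lambda,2k))\ge w(B(\lambda',k))$ with no appeal to \eqref{weighteddoubling}; the enlargement costs only a $\mu$-doubling constant in the numerator $\mu(B(\lambda,2k))$, which is absorbed into the prefactor $2^{3d_{\mathcal G}}D_{\mathcal G}$. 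With that adjustment your argument closes, including the $p=1$ endpoint, where \eqref{Aq-weight2} on the enlarged ball plays the same role.
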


By Propositions \ref{beurling.prop} and \ref{weighted.prop}, we conclude that
   ${\mathcal B}_{r, \alpha}({\mathcal G})$ with $1\le r\le \infty$ and $\alpha>d_{\mathcal G}(1-1/r)$ are Banach subalgebras of ${\mathcal B}(\ell^p_w)$ too.
We remark that the  subalgebra property in  Proposition \ref{weighted.prop}
was established  in \cite{CJS18, shinsun19}   for the unweighted case  and
in \cite{sunca11} for the weighted case on the lattice  graph ${\mathcal Z}^d$.

\section{Polynomial control on  optimal lower stability bounds}\label{lowerbound.section}

In this section, we show that weighted stabilities of  matrices in  a  Beurling algebra for different exponents and Muckenhoupt weights are equivalent to each other, and  reciprocal of the optimal lower stability bound for one exponent and  weight is  dominated by a polynomial of
reciprocal of
 the optimal lower stability bound for another exponent and weight.

\begin{thm}\label{mainthm}
Let $ 1 \le r \le \infty,  \ 1\le  p, q<\infty$,
${\mathcal G}:=(V,E)$ be a connected simple graph satisfying the doubling property \eqref{doubling},
$w, \, w'$  be
Muckenhoupt $A_p$-weight and $A_q$-weight respectively,
 and 
 let $A \in {{\mathcal B}_{r,\alpha}}({\mathcal G})$ for some $\alpha >\tilde d_{\mathcal G}- d_{\mathcal G}/r$,
 where $d_{\mathcal G}$ and $\tilde d_{\mathcal G}$ are the Beurling and generalized Beurling dimension
 of the graph  ${\mathcal G}$ respectively.
If  $A$ has $\ell^p_w$-stability with the optimal lower stability bound $\beta_{p, w}(A)$,
\begin{equation}\label{mainthm.eq-1}
\beta_{p, w}(A) \|c\|_{p,w} \le \|A c\|_{p,w} \ \ {\rm for \ all} \  c\in \ell^p_w,
\end{equation}
 then  $A$ has $\ell^{q}_{w'}$-stability
 with the optimal lower stability bound  denoted by $\beta_{q, w'}(A)$,
 \begin{equation}\label{mainthm.eq0}
\beta_{q, w'}(A) \|c\|_{q,w'} \le \|A c\|_{q,w'} \ \ {\rm for \ all}\ c\in \ell^{q}_{w'}.
\end{equation}
 Moreover, there exists an absolute constant $C$, independent of matrices
$A\in {{\mathcal B}_{r,\alpha}}({\mathcal G})$ and weights $w, \, w'$, such that
\begin{eqnarray}\label{lower-bound}
\frac{ \beta_{p, w}(A)}{ \beta_{q,w'}(A)} & \le  &  C
\big(A_{q}(w')\big)^{1/q} \big(A_p(w)\big)^{1/p}
  \Big( \frac{\big(A_p(w)\big)^{2/p} \|A\|_{{\mathcal B}_{r,\alpha}}}{\beta_{p, w}(A)}\Big)^{E(\alpha, r, d_{\mathcal G})
}\nonumber\\
 & & \times
 \left\{
 \begin{array}{ll}  1
 & {\rm  if} \ \alpha\ne 1+d_{\mathcal G}/r'\\
 \Big(\ln\Big(\frac{(A_p(w))^{2/p}
 \|A\|_{{\mathcal B}_{r,\alpha}}}{\beta_{p,w}(A)}\Big)\Big)^{(2 d_{\mathcal G}+1)/r'}
 & {\rm if} \ \alpha=1+d_{\mathcal G}/r'
 \end{array}\right.
\end{eqnarray}
where  $1/r'=1-1/r$ and
$$E(\alpha, r,  d_{\mathcal G})=\frac{\tilde d_{\mathcal G}+d_{\mathcal G}+1}{\min\big(1,\alpha-d_{\mathcal G}/r'\big)}.$$
\end{thm}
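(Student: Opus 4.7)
The plan is to reduce the theorem to producing, for any $A\in{\mathcal B}_{r,\alpha}({\mathcal G})$ with $\ell^p_w$-stability, a matrix $B=(b(\lambda,\lambda'))_{\lambda,\lambda'\in V}$ lying in ${\mathcal B}_{1,0}({\mathcal G})$ and satisfying the pointwise inequality \eqref{crucialestimate} for all $c\in \ell^p_w$. Granting such a $B$, the theorem follows quickly: by Proposition \ref{weighted.prop} applied to the matrix $(|b(\lambda,\lambda')|)$, for any $c\in\ell^q_{w'}$ we get
\begin{equation*}
\|c\|_{q,w'}\le \bigl\||b(\cdot,\cdot)|\cdot |Ac|\bigr\|_{q,w'}
\le 2^{3d_{\mathcal G}}D_{\mathcal G}(A_q(w'))^{1/q}\|B\|_{{\mathcal B}_{1,0}}\|Ac\|_{q,w'},
\end{equation*}
so $A$ has $\ell^q_{w'}$-stability with $\beta_{q,w'}(A)^{-1}\lesssim (A_q(w'))^{1/q}\|B\|_{{\mathcal B}_{1,0}}$. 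All the quantitative content of \eqref{lower-bound} is then packaged into a polynomial bound on $\|B\|_{{\mathcal B}_{1,0}}$.

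To construct $B$ I would pass to a maximal $N$-disjoint set $V_N$ of fusion vertices for a parameter $N\ge 1$ to be optimized, and build $B$ by combining a coarse-scale inverse on $V_N$ with local reconstructions. First, split $A=A_K+(A-A_K)$ with $K\asymp N$ using the truncation from \eqref{defAN}; by Proposition \ref{beurling.prop}(v) the tail $A-A_K$ has small ${\mathcal B}_{1,0}$-norm of order $N^{-\alpha+d_{\mathcal G}(1-1/r)}\|A\|_{{\mathcal B}_{r,\alpha}}$. Next, using the weighted $\ell^p_w$-stability of $A$, together with the Muckenhoupt $A_p$-weight estimate \eqref{Aq-char} to convert $\ell^p_w$-stability of $A$ into a stability statement on $V_N$ for a coarsened matrix built from $A_K$, invert the coarsened matrix in ${\mathcal B}_{r,\alpha-(\tilde d_{\mathcal G}-d_{\mathcal G})/r;N}(V_N)$ via a Neumann series whose geometric ratio is controlled by the $S_{A,N}$-estimate \eqref{SAN.estimate} of Proposition \ref{beurlingonmaximalsets.pr}(iii). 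Finally, lift this inverse from $V_N$ back to ${\mathcal G}$ via Proposition \ref{beurlingonmaximalsets.pr}(ii) to obtain a candidate $B$ whose Beurling norm is governed by $N^{\alpha+\tilde d_{\mathcal G}/r}\beta_{p,w}(A)^{-1}\|A\|_{{\mathcal B}_{r,\alpha}}$.

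The main obstacle, and the delicate part of the argument, is precisely this step: turning the \emph{global} $\ell^p_w$-stability hypothesis on $A$ into the \emph{pointwise} control \eqref{crucialestimate} on the whole graph. The Muckenhoupt structure enters twice — once to transport $\ell^p_w$-stability to a discrete stability on $V_N$ (using the doubling estimate \eqref{weighteddoubling} and \eqref{Aq-char} applied to balls of radius $\asymp N$), and once again to control the local redistribution across balls $B(\lambda,2N)$ appearing in \eqref{beurlingonmaximalsets.pr.eq2}. Each invocation of the $A_p$-inequality produces a factor $(A_p(w))^{1/p}$, which together yield the $(A_p(w))^{2/p}$ factor in \eqref{lower-bound}. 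The coarse-scale inversion in ${\mathcal B}_{r,\alpha-(\tilde d_{\mathcal G}-d_{\mathcal G})/r;N}(V_N)$ must be realized via Proposition \ref{beurlingonmaximalsets.pr}(iii) so that the Neumann series converges, which is where the threshold $\alpha>\tilde d_{\mathcal G}-d_{\mathcal G}/r$ in the hypothesis is used.

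To close the estimate I would optimize $N$. The Neumann series for the coarse inverse converges as soon as the quantity $N^{-\min(1,\alpha-d_{\mathcal G}/r')}(A_p(w))^{2/p}\|A\|_{{\mathcal B}_{r,\alpha}}/\beta_{p,w}(A)$ is at most a small absolute constant, and choosing $N$ to make this inequality tight forces
\begin{equation*}
N\asymp \Bigl(\tfrac{(A_p(w))^{2/p}\|A\|_{{\mathcal B}_{r,\alpha}}}{\beta_{p,w}(A)}\Bigr)^{1/\min(1,\alpha-d_{\mathcal G}/r')},
\end{equation*}
with an additional logarithmic correction in the critical case $\alpha=1+d_{\mathcal G}/r'$ coming from the corresponding factor in \eqref{SAN.estimate}. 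Inserting this choice into the bound on $\|B\|_{{\mathcal B}_{1,0}}$, whose dominant power of $N$ is $N^{\alpha+\tilde d_{\mathcal G}/r}$ (further bounded by $N^{\tilde d_{\mathcal G}+d_{\mathcal G}+1}$ after absorbing $\alpha-d_{\mathcal G}/r'$ type factors against the denominator $\min(1,\alpha-d_{\mathcal G}/r')$), produces exactly the exponent $E(\alpha,r,d_{\mathcal G})=(\tilde d_{\mathcal G}+d_{\mathcal G}+1)/\min(1,\alpha-d_{\mathcal G}/r')$ and the claimed polynomial-with-logarithm bound \eqref{lower-bound}.
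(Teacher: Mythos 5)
Your proposal is correct and follows essentially the same route as the paper: the pointwise ``approximate inverse'' inequality \eqref{crucialestimate} is exactly what Lemma \ref{tech.lem2} produces (via the local truncations $\Psi_{\lambda_m}^{2N}$ on a maximal $N$-disjoint set, a Neumann series in $S_{A,N}$ controlled by \eqref{SAN.estimate} and \eqref{norm-power}, and the lift \eqref{def-H} of Proposition \ref{beurlingonmaximalsets.pr}(ii)), and the paper then concludes by the same optimization of $N$ and application of Proposition \ref{weighted.prop}. The only cosmetic difference is that the paper derives the coarse-scale inequality through the commutator $[\Psi_{\lambda_m}^{2N},A]$ and the reduction to $\alpha\le \tilde d_{\mathcal G}-d_{\mathcal G}/r+1$ via nesting of the Beurling algebras, rather than your ``coarsened $A_K$'' phrasing.
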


\begin{rem}\label{mainthm.rem}
{\rm
 The equivalence of unweighted stabilities for different exponents
 is discussed
 for matrices  in
Baskakov-Gohberg-Sj\"ostrand  algebras, Jaffard algebras  and  Beurling algebras \cite{akramjfa09,  shincjfa09,sunca11, tesserajfa10},
for  convolution operators \cite{barnes90},
and  for localized  integral operators of non-convolution type  
\cite{fang18, fang17, rssun12, shincjfa09}.
For a matrix $A$ in the Beurling algebra ${\mathcal B}_{r, \alpha}$ with $1\le r\le \infty$ and $\alpha>d_{\mathcal G}(1-1/r)$,
Shin and Sun use the boot-strap argument in \cite{shinsun19} to prove  that
 reciprocal of its optimal lower unweighted stability bound for one exponent is dominated by a polynomial of reciprocal of its
optimal lower unweighted stability bound for another exponent,
\begin{equation}\label{stability.oldthm.eq2}
\frac{\|A\|_{{\mathcal B}_{r, \alpha}}}{\beta_{q, w_0}}
 \le  C  \left\{\begin{array}
{ll} \Big(\frac{\|A\|_{{\mathcal B}_{r, \alpha}}}{\beta_{p, w_0}}\Big)^{(1+\theta(p, q))^{K_0}}
 & {\rm if} \ \alpha\ne 1+d_{\mathcal G}/r'\\
\Big(\frac{\|A\|_{{\mathcal B}_{r, \alpha}}}{\beta_{p, w_0}}
\ln \big(1+\frac{\|A\|_{{\mathcal B}_{r, \alpha}}}{\beta_{p, w_0}}\big)\Big)^{(1+\theta(p, q))^{K_0}}
  &  {\rm if} \ \alpha=1+d_{\mathcal G}/r',
  \end{array}\right.
\end{equation}
where  $C$ is an absolute constant, $K_0$ is a positive integer satisfying
$K_0> \frac{d_{\mathcal G}}{\min(\alpha-d_{\mathcal G}/r', 1)}$,
and  $$\theta(p, q)=\frac{d_{\mathcal G}|1/p-1/q|}{K_0 \min(\alpha-d_{\mathcal G}/r', 1)-d_{\mathcal G}|1/p-1/q|}.$$
Given $1\le p<\infty$, we remark that for an exponent $q$ close to $p$,
the conclusion \eqref{stability.oldthm.eq2} provides  a better estimate to the optimal lower unweighted stability bound
$\beta_{q, w_0}(A)$ than the one in
  \eqref{lower-bound} with  $w=w'=w_0$, while the conclusion  \eqref{lower-bound} with  $w=w'=w_0$  gives a tighter estimate
  to the optimal lower unweighted stability bound
$\beta_{q, w_0}(A)$ than  the one  in \eqref{stability.oldthm.eq2} when $q$ is  close to one or infinity.
}\end{rem}

\smallskip

For $ 1\le N\in\ZZ$ and $ \lambda \in V$, we introduce a truncation operator $\chi_\lambda^N$
and its smooth version   $\Psi_\lambda^N$ by
\begin{equation}\label{def-trunc}
\chi_{\lambda}^N:\big(c(\lambda)\big)_{{\lambda\in V}} \longmapsto
\big( \chi_{[0,N]} \big(\rho(\lambda,\lambda') \big) c(\lambda')
\big)_{{\lambda'\in V}}
\end{equation}
and
\begin{equation}\label{def-smooth-trunc}
\Psi_{\lambda}^N:\big(c(\lambda)\big)_{{\lambda\in V}}\longmapsto
\big( \psi_0 \big(\rho(\lambda,\lambda')/N  \big) c(\lambda')
\big)_{{\lambda'\in V}},
\end{equation}
where $\psi_0(t)=\max\{0,\min(1,3-2|t|)\}$ is the trapezoid function.
The  operators $\chi_{\lambda}^N$ and $\Psi_{\lambda}^N$ localize a sequence
to a neighborhood of $\lambda$ and they can be considered as  diagonal matrices with entries
$\chi_{B(\lambda,N)}(\lambda') $ and $\psi_0(\rho(\lambda,\lambda')/N), \lambda'\in V$ respectively.

Let $ V_{N}$  be a  maximal $N$-disjoint set
  of fusion vertices.
  To prove Theorem \ref{mainthm},  we  start from an estimate to the weighted terms
$\big(w\big(B(\lambda_m, 4N)\big))^{-1/p}
\|\Psi_{\lambda_m}^{2N} c\|_{p,w},\lambda_m\in V_N$, for sufficiently large $N$, which is established in
\cite{shinsun19}
 for  the trivial weight $w_0$.

\begin{lem}\label{tech.lem}
Let $1\le p<\infty,  1\le r\le \infty,  \alpha>d_{\mathcal G}(1-1/r)$,
$w$ be a Muckenhoupt $A_p$-weight,
and $A=(a(\lambda, \lambda'))_{\lambda, \lambda'\in V}\in {\mathcal B}_{r, \alpha}({\mathcal G})$ have $\ell^p_w$-stability.
Assume that  $N\ge 1$ is a positive integer such that
\begin{equation}\label{tech.lem.eq1}
2C_1  (A_p(w))^{1/p}
 \|A\|_{{\mathcal B}_{r,\alpha}} N^{-\alpha  +d_{\mathcal G}(1-1/r)}\le \beta_{p, w}(A),
\end{equation}
where $\beta_{p, w}(A)$ is the optimal lower $\ell^p_w$-stability bound, $C_0$ is the constant in \eqref{AN-appr} and $C_1= 2^{3d_{\mathcal G}}  C_0    D_{\mathcal G}$.
Then 
 for all
maximal $N$-disjoint sets
 $ V_{N}$  of fusion vertices  and weighted sequences $c\in \ell^p_w$, we have
\begin{eqnarray}\label{tech.lem.eq2}
& &   \beta_{p, w}(A)  \big(w\big(B(\lambda_m, 4N)\big))^{-1/p}
\|\Psi_{\lambda_m}^{2N} c\|_{p,w} 
 \le 2 \big(w\big(B(\lambda_m, 4N)\big))^{-1/p}
\|\Psi_{\lambda_m}^{2N} {A}c\|_{p,w} \nonumber\\
& &  \qquad\qquad
+ C_2 \big(A_p(w)\big)^{2/p}
\sum_{\lambda_k\in V_N} S_{A, N}(\lambda_m, \lambda_k)  \big(w\big(B(\lambda_k, 4N)\big))^{-1/p} \|\Psi_{\lambda_k}^{2N} c\|_{p,w},
\ \lambda_m\in V_N,
\end{eqnarray}
where the smooth truncation operators  $\Psi_{\lambda_m}^{2N}, \lambda_m\in V_N$, are defined in  \eqref{def-smooth-trunc}, 
 the matrix $S_{A, N}=(S_{A, N}(\lambda_m, \lambda_k))_{\lambda_m, \lambda_k\in V_N} $ is given in \eqref{VAN-1}, and
$C_2\ge 2$ is an absolute constant.
\end{lem}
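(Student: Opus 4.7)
The plan is to apply the $\ell^p_w$-stability of $A$ to the localized sequence $\Psi_{\lambda_m}^{2N} c$ and then transfer the operator across the smooth cutoff via a commutator identity. Since $\Psi_{\lambda_m}^{2N} c$ is supported on $B(\lambda_m, 3N) \subset B(\lambda_m, 4N)$, the stability hypothesis gives $\beta_{p, w}(A) \|\Psi_{\lambda_m}^{2N} c\|_{p,w} \le \|A \Psi_{\lambda_m}^{2N} c\|_{p,w}$. Decomposing $A = A_N + (A - A_N)$ and combining the weighted bound of Proposition~\ref{weighted.prop} with the approximation estimate \eqref{AN-appr} of Proposition~\ref{beurling.prop}(v), I would estimate
\begin{equation*}
\|(A - A_N) \Psi_{\lambda_m}^{2N} c\|_{p,w} \le C_1 (A_p(w))^{1/p} \|A\|_{{\mathcal B}_{r,\alpha}} N^{-\alpha + d_{\mathcal G}(1-1/r)} \|\Psi_{\lambda_m}^{2N} c\|_{p,w},
\end{equation*}
which by assumption \eqref{tech.lem.eq1} is at most $(1/2)\beta_{p,w}(A) \|\Psi_{\lambda_m}^{2N} c\|_{p,w}$. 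Absorbing this tail on the left produces $\beta_{p,w}(A)\|\Psi_{\lambda_m}^{2N} c\|_{p,w} \le 2 \|A_N \Psi_{\lambda_m}^{2N} c\|_{p,w}$, which is where the factor $2$ in \eqref{tech.lem.eq2} originates. I would then use the identity $A_N \Psi_{\lambda_m}^{2N} c = \Psi_{\lambda_m}^{2N} A c - \Psi_{\lambda_m}^{2N}(A - A_N) c + [A_N, \Psi_{\lambda_m}^{2N}] c$ so that the bound becomes the main term $2\|\Psi_{\lambda_m}^{2N} A c\|_{p,w}$ plus two error terms to control.

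Next I would express each error term as a weighted sum over $\lambda_k \in V_N$ of the local norms $\|\Psi_{\lambda_k}^{2N} c\|_{p,w}$. Since $V_N$ is a maximal $N$-disjoint set, every vertex $\lambda'$ lies in some $B(\lambda_k, 2N)$, so the pointwise inequality $|c(\lambda')| \le \sum_{\lambda_k\in V_N}|(\Psi_{\lambda_k}^{2N} c)(\lambda')|$ holds. The tail error $\Psi_{\lambda_m}^{2N}(A - A_N) c$ has kernel $a(\lambda,\lambda')$ restricted to $\rho(\lambda, \lambda') > N$ and is supported on $B(\lambda_m, 3N)$, while the truncated commutator $[A_N, \Psi_{\lambda_m}^{2N}]$ has bandwidth at most $N$ and kernel dominated by $|a(\lambda, \lambda')|\min(2, \rho(\lambda,\lambda')/N)$, supported in the annular region where $\rho(\lambda_m, \cdot) \in [2N, 3N]$. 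Substituting the covering bound for $c$ into both error terms and applying Proposition~\ref{weighted.prop} to the resulting sub-matrices yields, for each $\lambda_k$, a contribution of the form (coefficient)$\cdot (A_p(w))^{1/p}\|\Psi_{\lambda_k}^{2N} c\|_{p,w}$. Sorting the coefficients by $\rho(\lambda_m, \lambda_k)$ will reproduce the two-regime envelope $S_{A, N}(\lambda_m, \lambda_k)$ of \eqref{VAN-1}: when $\rho(\lambda_m, \lambda_k) > 12(N+1)$ the bandwidth constraint forces $\rho(\lambda, \lambda') \ge \rho(\lambda_m, \lambda_k)/2$ on the active annulus, giving the decay factor $h_A(\rho(\lambda_m, \lambda_k)/2)$ multiplied by the volume factor $N^{d_{\mathcal G}}$; when $\rho(\lambda_m, \lambda_k) \le 12(N+1)$ the Lipschitz bound together with the polynomial-growth count \eqref{polynomial-growth} collapses the estimate to the averaged sum $N^{-1}\sum_{n=0}^{2N} h_A(n)(n+1)^{d_{\mathcal G}}$.

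Finally, the common normalization $(w(B(\lambda_m, 4N)))^{-1/p}$ on the left-hand side must be converted to the per-$\lambda_k$ normalization $(w(B(\lambda_k, 4N)))^{-1/p}$ appearing on the right-hand side of \eqref{tech.lem.eq2}. For this I would invoke the weighted doubling property \eqref{weighteddoubling}: the ratio $(w(B(\lambda_m, 4N))/w(B(\lambda_k, 4N)))^{1/p}$ is bounded by $(A_p(w))^{1/p}$ times a polynomial in $1+\rho(\lambda_m, \lambda_k)/N$, and together with the $(A_p(w))^{1/p}$ factor already produced by Proposition~\ref{weighted.prop} this yields the advertised prefactor $(A_p(w))^{2/p}$. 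The main obstacle will be the bookkeeping needed to verify that the polynomial growth in $\rho(\lambda_m, \lambda_k)/N$ arising from the weight-ratio step is absorbed into $S_{A, N}$ without degrading its decay rate, and that the overlap bound \eqref{overlap-counting} for the covering $\{B(\lambda_k, 2N)\}_{\lambda_k \in V_N}$ keeps the sum over $\lambda_k$ well-behaved in the $\ell^p_w$-norm; these two combinatorial constraints together justify the polynomial factor $(n+1)^{d_{\mathcal G}}$ that appears inside the near-field piece of $S_{A, N}$.
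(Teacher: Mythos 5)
Your overall architecture matches the paper's: localize via the stability inequality applied to $\Psi_{\lambda_m}^{2N}c$, absorb the band-truncation tail using \eqref{tech.lem.eq1} together with Proposition \ref{weighted.prop} and \eqref{AN-appr} (this is exactly where the factor $2$ comes from, and your constant $C_1=2^{3d_{\mathcal G}}C_0D_{\mathcal G}$ is the right one), then distribute the remaining commutator-type errors over the covering $\{B(\lambda_k,2N)\}_{\lambda_k\in V_N}$ and sort by $\rho(\lambda_m,\lambda_k)$ into the two regimes of $S_{A,N}$. Your algebraic split $A_N\Psi_{\lambda_m}^{2N}c=\Psi_{\lambda_m}^{2N}Ac-\Psi_{\lambda_m}^{2N}(A-A_N)c+[A_N,\Psi_{\lambda_m}^{2N}]c$ differs slightly from the paper's (which keeps $A$ intact in $[\Psi_{\lambda_m}^{2N},A]$ and only peels off the far-output piece $(I-\chi_{\lambda_m}^{4N})A\chi_{\lambda_m}^{3N}\Psi_{\lambda_m}^{2N}$), but the block-norm envelopes you describe do reduce to \eqref{VAN-1} in both regimes.

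The genuine gap is the step you yourself flag as ``the main obstacle'': converting the normalization $(w(B(\lambda_m,4N)))^{-1/p}$ into $(w(B(\lambda_k,4N)))^{-1/p}$. Your plan is to apply Proposition \ref{weighted.prop} to each block (which produces no weight ratio at all) and then compare the two ball-weights via the doubling estimate \eqref{weighteddoubling}. For $\rho(\lambda_m,\lambda_k)>12(N+1)$ that comparison costs a factor of order $(1+\rho(\lambda_m,\lambda_k)/N)^{\log_2 D(\mu)}$, which is \emph{not} bounded by an absolute constant and cannot be absorbed into the far-field envelope $N^{d_{\mathcal G}}h_A(\rho(\lambda_m,\lambda_k)/2)$; you would end up proving \eqref{tech.lem.eq2} only with a strictly larger matrix than $S_{A,N}$, which would in turn force a stronger restriction on $\alpha$ in \eqref{SAN.estimate} and everything downstream. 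The paper avoids doubling entirely in the far regime: there all entries of the block are bounded by the single number $h_A(\rho(\lambda_m,\lambda_k)/2)$, so the output of the block applied to $\Psi_{\lambda_k}^{2N}c$ is dominated by the constant $h_A(\rho/2)\sum_{\lambda'\in B(\lambda_k,4N)}|\Psi_{\lambda_k}^{2N}c(\lambda')|$ on $B(\lambda_m,4N)$; taking the $\ell^p_w$-norm produces the factor $(w(B(\lambda_m,4N)))^{1/p}$ \emph{exactly}, and the $A_p$-characterization \eqref{Aq-char} applied on the ball $B(\lambda_k,4N)$ converts the $\ell^1$-sum of the input into $(A_p(w))^{1/p}\mu(B(\lambda_k,4N))\,(w(B(\lambda_k,4N)))^{-1/p}\|\Psi_{\lambda_k}^{2N}c\|_{p,w}$, with $\mu(B(\lambda_k,4N))\lesssim N^{d_{\mathcal G}}$ supplying the volume factor. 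The weight ratio thus appears with no $\rho$-dependence, and the doubling comparison \eqref{weighteddoubling} is invoked only in the near regime $\rho(\lambda_m,\lambda_k)\le 12(N+1)$, where it costs the absolute constant $(D(\mu))^{3p}A_p(w)$ — which is also where the second factor of $(A_p(w))^{1/p}$ in the prefactor $(A_p(w))^{2/p}$ actually comes from. You need to replace your doubling step by this rank-one/$A_p$-characterization argument for the far pairs to obtain the lemma as stated.
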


Let $[\Psi_{\lambda_m}^{2N}, {A}]:=\Psi_{\lambda_m}^{2N}{A}- {A} \Psi_{\lambda_m}^{2N}$ be the commutator  between
the smooth truncation operator $\Psi_{\lambda_m}^{2N}$ and the matrix ${A}$  \cite{shinsun19,
shincjfa09, sjostrand94}.
A crucial step in the  proof of Lemma \ref{tech.lem} is the following estimate to the commutator $[\Psi_{\lambda_m}^{2N}, {A}]$,
$$\|\chi_{\lambda_m}^{4N } [\Psi_{\lambda_m}^{2N}, {A}]\chi_{\lambda_k}^{3N}\|_{{\mathcal B}_{1, 0}}\lesssim S_{A, N}(\lambda_m, \lambda_k),
\ \lambda_m, \lambda_k\in V_N,$$ see Section \ref{tech.lem.pfsection} for the detailed argument.

By  Propositions \ref{VNbanach.prop} and \ref{beurlingonmaximalsets.pr}, there exists an absolute constant
$C_3$ such that
\begin{equation}\label{norm-power}
\big\|\big(S_{A, N}\big)^{\ell}\big\|_{\mathcal{B}_{r, \alpha-(\tilde d_{\mathcal G}-d_{\mathcal G})/r;N}} \le
(C_3\|{A}\|_{{\mathcal B}_{r,\alpha}})^l \times \left\{\begin{array}{ll} N^{-\min(1,\alpha-d_{\mathcal G}/r')l } & {\rm if} \ \alpha\ne d_{\mathcal G}/r'+1\\
N^{-l} (\ln (N+1))^{l/r'}  & {\rm if} \ \alpha= d_{\mathcal G}/r'+1
\end{array}\right.
\end{equation}
hold for all $l\ge 1$, where $1/r'=1-1/r$.
 Applying \eqref{tech.lem.eq2} and \eqref{norm-power} repeatedly,
we have the following crucial estimates \eqref{tech.lem2.eq2} and \eqref{tech.lem2.eq3}, see Section \ref{tech.lem2.pfsection} for the proof.

\begin{lem} \label{tech.lem2}
Let $1\le p<\infty, w$ be a Muckenhoupt $A_p$-weight, $A=(a(\lambda, \lambda'))_{\lambda, \lambda'\in V}\in {\mathcal B}_{r, \alpha}({\mathcal G})$ for some
$1\le r\le \infty$ and $\alpha>\tilde d_{\mathcal G}-d_{\mathcal G}/r$.
Assume that $A$ has $\ell^p_w$-stability with the optimal lower stability bound $\beta_{p, w}(A)$ and
that $1\le N\in \ZZ$ satisfies
\begin{equation}\label{tech.lem2.eq1}
\beta_{p, w} (A) \ge 2 \max(C_1, C_2 C_3) \big(A_p(w)\big)^{2/p}
\|{A}\|_{{\mathcal B}_{r,\alpha}}
\times
\left\{\begin{array}{ll} N^{-\min(1,\alpha-d_{\mathcal G}/r') } & {\rm if} \ \alpha\ne d_{\mathcal G}/r'+1\\
N^{-1} (\ln (N+1))^{1/r'}  & {\rm if} \ \alpha=d_{\mathcal G}/r'+1,
\end{array}\right.
\end{equation}
where $C_1, C_2, C_3$ are absolute constants in \eqref{tech.lem.eq1}, \eqref{tech.lem.eq2} and \eqref{norm-power} respectively.
Then there exist a matrix $H_{A,N}=(H_{A,N}(\lambda, \lambda'))_{\lambda, \lambda'\in V}$ and two absolute constants $C_4$ and $C_5$ such that
 \begin{equation} \label{tech.lem2.eq2}
 \|H_{A, N}\|_{{\mathcal B}_{r,  \alpha}}\le  C_4  N^{\alpha+d_{\mathcal G}/r}
 \end{equation}
 and
 \begin{equation}\label{tech.lem2.eq3}
|c(\lambda)|\le C_5
 (A_p(w))^{1/p} ( \beta_{p, w}(A))^{-1}
N^{d_{\mathcal G}} \sum_{\lambda'\in V}
H_{A, N}(\lambda, \lambda')|Ac(\lambda')|, \ c\in \ell^p_w.
\end{equation}
\end{lem}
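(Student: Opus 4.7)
The plan is to iterate Lemma \ref{tech.lem} into a Neumann series on the maximal disjoint set $V_N$ and then lift the resulting matrix to the whole graph via Proposition \ref{beurlingonmaximalsets.pr}(ii). I begin by introducing the auxiliary sequences
$$u(\lambda_m) := (w(B(\lambda_m,4N)))^{-1/p}\|\Psi^{2N}_{\lambda_m}c\|_{p,w},\qquad v(\lambda_m) := (w(B(\lambda_m,4N)))^{-1/p}\|\Psi^{2N}_{\lambda_m}Ac\|_{p,w}$$
indexed by $\lambda_m\in V_N$, so that \eqref{tech.lem.eq2} becomes the coordinate-wise operator inequality $u \le (2/\beta_{p,w}(A))v + Tu$, where $T := C_2(A_p(w))^{2/p}(\beta_{p,w}(A))^{-1}S_{A,N}$ is a non-negative matrix on $V_N$. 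The hypothesis \eqref{tech.lem2.eq1} combined with \eqref{norm-power} yields $\|T^\ell\|_{\mathcal{B}_{r,\alpha-(\tilde{d}_{\mathcal{G}}-d_{\mathcal{G}})/r;N}}\le 2^{-\ell}$ for every $\ell\ge 1$, so iterating the operator inequality and passing to the limit gives $u\le (2/\beta_{p,w}(A))Gv$ coordinate-wise, where $G := \sum_{\ell\ge 0}T^\ell$ is an element of $\mathcal{B}_{r,\alpha-(\tilde{d}_{\mathcal{G}}-d_{\mathcal{G}})/r;N}$ with norm at most $2$.

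To translate this back to a pointwise bound on $c$, for each $\lambda\in V$ the maximality condition \eqref{maximum1} supplies $\lambda_m(\lambda)\in V_N$ with $\rho(\lambda,\lambda_m(\lambda))\le 2N$; since $\psi_0\equiv 1$ on $[0,1]$, one has $|c(\lambda)|\le w(\lambda)^{-1/p}(w(B(\lambda_m(\lambda),4N)))^{1/p} u(\lambda_m(\lambda))$, while the elementary inequality $(\sum|a_i|^p)^{1/p}\le\sum|a_i|$ gives $\|\Psi^{2N}_{\lambda_k}Ac\|_{p,w}\le \sum_{\lambda'\in B(\lambda_k,3N)}w(\lambda')^{1/p}|Ac(\lambda')|$. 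Combining these and swapping the order of summation produces
$$|c(\lambda)|\le \frac{2}{\beta_{p,w}(A)}\frac{(w(B(\lambda_m(\lambda),4N)))^{1/p}}{w(\lambda)^{1/p}}\sum_{\lambda'\in V}|Ac(\lambda')|\sum_{\substack{\lambda_k\in V_N\\ \rho(\lambda_k,\lambda')\le 3N}}G(\lambda_m(\lambda),\lambda_k)\frac{w(\lambda')^{1/p}}{(w(B(\lambda_k,4N)))^{1/p}}.$$

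The Muckenhoupt $A_p$ condition now extracts the factor $(A_p(w))^{1/p}N^{d_{\mathcal{G}}}$. Applying \eqref{Aq-char} to $c=\chi_{\{\lambda\}}$ with $B=B(\lambda_m(\lambda),4N)$ yields $w(B(\lambda_m(\lambda),4N))/w(\lambda)\le A_p(w)\,\mu(B(\lambda_m(\lambda),4N))^p\le A_p(w) D_{\mathcal{G}}^p(4N+1)^{pd_{\mathcal{G}}}$, supplying the desired prefactor after taking the $1/p$-power; and since $\lambda'\in B(\lambda_k,4N)$ trivially gives $w(B(\lambda_k,4N))^{-1/p}\le w(\lambda')^{-1/p}$, the inner weight ratio collapses to $1$. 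Defining
$$H_{A,N}(\lambda,\lambda') := \sum_{\lambda_m\in V_N\cap B(\lambda,2N)}\sum_{\lambda_k\in V_N\cap B(\lambda',4N)}G(\lambda_m,\lambda_k)$$
(summing over all admissible $\lambda_m$ by non-negativity and enlarging $3N$ to $4N$ to match the lift formula) completes the pointwise inequality \eqref{tech.lem2.eq3}. Finally, Proposition \ref{beurlingonmaximalsets.pr}(ii) applied to $G$ gives $\|H_{A,N}\|_{\mathcal{B}_{r,\alpha}}\le 2\cdot 8^{\alpha+d_{\mathcal{G}}/r}(D(\mu))^7 N^{\alpha+d_{\mathcal{G}}/r}$, which is exactly the required bound \eqref{tech.lem2.eq2}.

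The main obstacle I anticipate is the rigorous justification that the Neumann tail $T^\ell u$ vanishes pointwise as $\ell\to\infty$, since $u$ is a priori only finite rather than bounded on $V_N$ (the Muckenhoupt weight $w$ may decay along infinite rays); the natural route is to combine the row-sum bound $\|T^\ell\|_{\mathcal{B}_{1,0;N}}\lesssim 2^{-\ell}$ (obtained from the embedding $\mathcal{B}_{r,\alpha-(\tilde{d}_{\mathcal{G}}-d_{\mathcal{G}})/r;N}\hookrightarrow \mathcal{B}_{1,0;N}$ in Proposition \ref{VNbanach.prop}(ii)) with the polynomial growth of $u(\lambda_k)$ inherited from the Muckenhoupt doubling \eqref{weighteddoubling} of $w$, so that $\sum_{\lambda_k}|T^\ell(\lambda_m,\lambda_k)|\,u(\lambda_k)\to 0$ for each fixed $\lambda_m$.
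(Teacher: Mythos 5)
Your construction coincides with the paper's: the paper forms $W_{A,N}=2I+2\sum_{l\ge 1}\big(C_2(A_p(w))^{2/p}(\beta_{p,w}(A))^{-1}\big)^l(S_{A,N})^l$ (your $2G$), bounds it by $4$ in ${\mathcal B}_{r,\alpha-(\tilde d_{\mathcal G}-d_{\mathcal G})/r;N}$ exactly as you do via \eqref{norm-power} and \eqref{tech.lem2.eq1}, lifts it to $H_{A,N}$ by the same double sum over $B(\lambda,2N)\times B(\lambda',4N)$ so that \eqref{tech.lem2.eq2} follows from Proposition \ref{beurlingonmaximalsets.pr}(ii), and derives \eqref{tech.lem2.eq3} by the same three ingredients (the covering property to pick $\lambda_m$ with $\lambda\in B(\lambda_m,2N)$, the $A_p$-characterization \eqref{Aq-char} applied to a delta sequence to get $w(B(\lambda_m,4N))\lesssim A_p(w)N^{pd_{\mathcal G}}w(\lambda)$, and the trivial bound $w(\lambda')\le w(B(\lambda_k,4N))$ to collapse the inner weight ratio).

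The one genuine gap is the point you flagged yourself: the justification that the Neumann tail vanishes. Your proposed route --- row-sum decay of $T^\ell$ in ${\mathcal B}_{1,0;N}$ against the pointwise envelope $u(\lambda_k)\le (w(B(\lambda_k,4N)))^{-1/p}\|c\|_{p,w}$, which by \eqref{weighteddoubling} grows like $(1+\rho(\lambda_m,\lambda_k))^{\log_2 D(\mu)}$ relative to $\lambda_m$ --- does not close in general. Membership in ${\mathcal B}_{1,0;N}$ only guarantees off-diagonal decay that is barely summable against the counting measure of $V_N$ (degree just above $\tilde d_{\mathcal G}$ when $\alpha$ is near $\tilde d_{\mathcal G}-d_{\mathcal G}/r$), whereas the envelope's growth exponent $\log_2 D(\mu)$ can be at least $\tilde d_{\mathcal G}$; already on ${\mathcal Z}$ with $r=1$, $\alpha$ small, and $w_\theta=(1+|n|)^\theta$ with $\theta$ near $-1$, the row sum against the envelope diverges. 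The paper's resolution is different in kind: it estimates the tail not coordinatewise against the envelope but in the weighted norm $\big(\sum_{\lambda_m}|\cdot|^p\,\alpha_{\lambda_m}\big)^{1/p}$ with $\alpha_{\lambda_m}=w(B(\lambda_m,4N))$, using the $V_N$-analogue of Proposition \ref{weighted.prop} (boundedness of ${\mathcal B}_{1,0;N}$-matrices on $\ell^p_{\alpha}(V_N)$) together with the finite-overlap identity $\sum_{\lambda_k}u(\lambda_k)^p\alpha_{\lambda_k}=\sum_{\lambda_k}\|\Psi^{2N}_{\lambda_k}c\|_{p,w}^p\lesssim\|c\|_{p,w}^p$. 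This gives the tail the bound $C_6\,2^{-L}\|c\|_{p,w}$, which both proves every coordinate is finite and forces the coordinatewise limit to be zero. You should replace your last paragraph by this operator-norm argument; with that substitution the proof is complete and matches the paper's.
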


Now 
we are ready to finish  the proof of Theorem \ref{mainthm}.

\begin{proof}[Proof of Theorem \ref{mainthm}]
As for $\alpha'\ge \alpha$,
 ${\mathcal B}_{r, \alpha'}({\mathcal G})$ is a Banach subalgebra of  ${\mathcal B}_{r, \alpha}({\mathcal G})$.
 Then it  suffices to prove
 \eqref{lower-bound} for all  $\alpha$ satisfying
 \begin{equation}\label{alpha.res}
  d_{\mathcal G}/r'\le \tilde d_{\mathcal G}-d_{\mathcal G}/r<\alpha\le \tilde d_{\mathcal G}-d_{\mathcal G}/r+1.
 \end{equation}
Define
\begin{equation}\label{mainthm.pf.eq1}
N_0=
\left\{\begin{array}{ll}
\tilde N_0
 & {\rm if} \ \alpha\ne d_{\mathcal G}/r'+1\\
2 \tilde N_0 (\ln (\tilde N_0+1))^{1-1/r}
 & {\rm if} \ \alpha = d_{\mathcal G}/r'+1,
\end{array}\right.
\end{equation}
where
$$\tilde N_0=\left\lfloor \Bigg (\frac{ 2 \max(C_1, C_2 C_3) \big(A_p(w)\big)^{2/p}
\|{A}\|_{{\mathcal B}_{r,\alpha}}} {\beta_{p, w}(A)} \Bigg)^{1/\min(1,\alpha-d_{\mathcal G}/r')}\right\rfloor +2.$$
Then one may verify that
\eqref{tech.lem2.eq1} is satisfied for $N=N_0$.
Applying Lemma \ref{tech.lem2} with $N$ replaced by $N_0$ and also Proposition
\ref{weighted.prop}, we have
\begin{equation}\label{mainthm.pf.eq2}
\|c\|_{q,w'}   \lesssim
( A_{q}(w'))^{1/q} ( A_p(w))^{1/p}  (\beta_{p, w}(A))^{-1}
N_{0}^{d_{\mathcal G}}
\|H_{A, N_{0}}\|_{\mathcal{B}_{1,0}}\|{A}c\|_{q,w'},  \  c\in \ell_w^p\cap \ell^{q}_{w'},
\end{equation}
where   $w'$ is  an $A_{q}$-weight with $1\le q<\infty$
and the matrix $H_{A, N}$ is given in Lemma \ref{tech.lem2}.
This together with \eqref{tech.lem2.eq2}  and  the density of $\ell_w^p\cap \ell^{q}_{w'}$ in $\ell^{q}_{w'}$
implies that
$$
\|c\|_{q, w'}\lesssim
(A_{q}(w'))^{1/q} (A_p(w))^{1/p} (\beta_{p, w}(A))^{-1}N_{0}^{\alpha+d_{\mathcal G}(1+1/r)}
\|Ac\|_{q,w'}\ {\rm for \ all} \ c\in \ell^{q}_{w'}.
$$
Therefore
\begin{equation}
\frac{
\|{A}\|_{{\mathcal B}_{r,\alpha}}} {\beta_{q, w'}(A)}
\lesssim  \frac{\big(A_{q}(w')\big)^{1/q}}{\big(A_p(w)\big)^{1/p}}
\frac{
 \big(A_{p}(w)\big)^{2/p}
\|{A}\|_{{\mathcal B}_{r,\alpha}}} {\beta_{p, w}(A)}  N_{0}^{\alpha+d_{\mathcal G}(1+1/r)},
\end{equation}
where $\beta_{q, w'}(A)$ is the optimal lower $\ell^{q}_{w'}$-stability bound of the matrix $A$.
This together with \eqref{alpha.res} and \eqref{mainthm.pf.eq1} completes the proof of Theorem \ref{mainthm}.
\end{proof}

\section{Norm-controlled inversion}
\label{normedinversion.section}

In this section, we
 show that
Banach algebras ${{\mathcal B}_{r,\alpha}}({\mathcal G})$
with $ 1 \le r \le \infty$ and $\alpha >\tilde d_{\mathcal G}-d_{\mathcal G}/r$ admit  a polynomial norm-controlled inversion in
$\mathcal B(\ell^p_w)$ for all $1\le p<\infty$ and Muckenhoupt $A_p$-weights, see Section \ref{norm-contro-inversion.thm.pfsection} for the proof.

\begin{thm}\label{norm-contro-inversion.thm}
Let $ 1 \le r \le \infty, \ 1\le p<\infty$,
${\mathcal G}:=(V,E)$ be a connected simple graph satisfying the doubling property \eqref{doubling}, and
$w$  be a Muckenhoupt $A_p$-weight.
If  $A$ belongs to ${{\mathcal B}_{r,\alpha}}({\mathcal G})$ for some $\alpha>\tilde d_{\mathcal G}-d_{\mathcal G}/r$
  and it is invertible in $\mathcal B(\ell^p_{w})$, then   $A^{-1}\in {\mathcal B}_{r, \alpha}$. Moreover, there exists an absolute constant $C$ such that
  \begin{align}\label{norm-inversion}
\|A^{-1}\|_{{\mathcal B}_{r, \alpha}}&\le C(A_{p}(w))^{1/p}\|A^{-1}\|_{\mathcal B(\ell^p_{w})}
\Big(\big(A_p(w)\big)^{2/p}\|A^{-1}\|_{\mathcal B(\ell^p_{w})} \|A\|_{{\mathcal B}_{r,\alpha}}\Big)^{(\alpha+d_{\mathcal G}(1+1/r))/\min(\alpha-d_{\mathcal G}/r', 1)}
\nonumber \\&
\quad\times
\left\{
\begin{array}{ll}
1
&{\rm if }\ \alpha\neq 1+d_{\mathcal G}/r'
\\
\Big(\ln\big((A_p(w))^{2/p}\|A^{-1}\|_{\mathcal B(\ell^p_{w})} \|A\|_{{\mathcal B}_{r,\alpha}}+1\big) \Big)^{(2 d_{\mathcal G}+1)/r'} &
{\rm if }\ \alpha= 1+d_{\mathcal G}/r',
\end{array}
\right.
\end{align}
where  $1/r'=1-1/r$.
\end{thm}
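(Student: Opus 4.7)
The plan is to deduce Theorem \ref{norm-contro-inversion.thm} almost immediately from the crucial pointwise estimate in Lemma \ref{tech.lem2} by applying it column by column to the inverse matrix. Since $A$ is invertible in ${\mathcal B}(\ell^p_w)$, it has $\ell^p_w$-stability with optimal lower bound $\beta_{p,w}(A)\ge \|A^{-1}\|_{{\mathcal B}(\ell^p_w)}^{-1}$, so Lemma \ref{tech.lem2} is applicable provided one chooses a positive integer $N$ large enough for condition \eqref{tech.lem2.eq1} to hold.

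The key step is to apply \eqref{tech.lem2.eq3} with $c = A^{-1}e_\mu$, where $e_\mu$ is the $\mu$-th standard basis vector. This sequence lies in $\ell^p_w$ (since $A^{-1}$ is bounded on $\ell^p_w$ and $e_\mu\in \ell^p_w$), and $Ac = e_\mu$, whose entries are indicator-like. Thus for every $\lambda,\mu\in V$,
\begin{equation*}
|A^{-1}(\lambda,\mu)| \le C_5 (A_p(w))^{1/p}(\beta_{p,w}(A))^{-1} N^{d_{\mathcal G}}\, H_{A,N}(\lambda,\mu).
\end{equation*}
Since ${\mathcal B}_{r,\alpha}({\mathcal G})$ is solid by Proposition \ref{beurling.prop}(i), this entrywise bound lifts to a Beurling norm bound:
\begin{equation*}
\|A^{-1}\|_{{\mathcal B}_{r,\alpha}} \le C_5 (A_p(w))^{1/p}(\beta_{p,w}(A))^{-1} N^{d_{\mathcal G}}\|H_{A,N}\|_{{\mathcal B}_{r,\alpha}}.
\end{equation*}
Inserting the estimate \eqref{tech.lem2.eq2} for $\|H_{A,N}\|_{{\mathcal B}_{r,\alpha}}\le C_4 N^{\alpha+d_{\mathcal G}/r}$ yields
\begin{equation*}
\|A^{-1}\|_{{\mathcal B}_{r,\alpha}} \lesssim (A_p(w))^{1/p}(\beta_{p,w}(A))^{-1} N^{\alpha + d_{\mathcal G}(1+1/r)}.
\end{equation*}

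It remains to choose the minimal admissible $N$, namely take $N=N_0$ exactly as in the proof of Theorem \ref{mainthm}: for $\alpha\ne 1 + d_{\mathcal G}/r'$ we select
\begin{equation*}
N_0 \approx \Big(\tfrac{(A_p(w))^{2/p}\|A\|_{{\mathcal B}_{r,\alpha}}}{\beta_{p,w}(A)}\Big)^{1/\min(1,\alpha-d_{\mathcal G}/r')},
\end{equation*}
and the analogous choice with a logarithmic correction factor $(\ln(\tilde N_0+1))^{1-1/r}$ in the borderline case $\alpha = 1+d_{\mathcal G}/r'$. As in the proof of Theorem \ref{mainthm}, it is enough to prove the bound for $\alpha$ in the range $\tilde d_{\mathcal G}-d_{\mathcal G}/r < \alpha \le \tilde d_{\mathcal G}-d_{\mathcal G}/r+1$, since ${\mathcal B}_{r,\alpha'}({\mathcal G})\hookrightarrow {\mathcal B}_{r,\alpha}({\mathcal G})$ for $\alpha'\ge \alpha$. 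Substituting $N_0$ into the previous display and then using $\beta_{p,w}(A)^{-1}\le \|A^{-1}\|_{{\mathcal B}(\ell^p_w)}$ produces exactly the bivariate polynomial bound \eqref{norm-inversion}, with the stated exponent $(\alpha+d_{\mathcal G}(1+1/r))/\min(\alpha-d_{\mathcal G}/r',1)$ and the logarithmic power $(2d_{\mathcal G}+1)/r'$ in the borderline case.

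The main obstacle is essentially absent at this stage: all of the genuine analytic work (the commutator estimate, the iterated use of the majorant matrix $S_{A,N}$ on the maximal disjoint set $V_N$, and the construction of the controlling matrix $H_{A,N}\in {\mathcal B}_{r,\alpha}$) has already been absorbed into Lemma \ref{tech.lem2}. The only care required here is bookkeeping: verifying that the choice of $N_0$ satisfies \eqref{tech.lem2.eq1} (which is immediate by construction), tracking the correct power of $A_p(w)$ through the substitution $\beta_{p,w}(A)^{-1}\le \|A^{-1}\|_{{\mathcal B}(\ell^p_w)}$, and handling the borderline exponent $\alpha = 1 + d_{\mathcal G}/r'$ with its logarithmic factor, all of which proceed exactly as in the proof of Theorem \ref{mainthm}.
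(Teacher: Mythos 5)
Your proposal is essentially the paper's own proof: the paper likewise applies the pointwise domination of Lemma \ref{tech.lem2} (with $\beta_{p,w}(A)$ replaced by $\|A^{-1}\|_{{\mathcal B}(\ell^p_w)}^{-1}$) to the columns $\check a_\lambda = A^{-1}e_\lambda$ of the inverse, uses $A\check a_\lambda = e_\lambda$ to read off the entrywise bound $|\check a(\lambda',\lambda)|\lesssim (A_p(w))^{1/p}\|A^{-1}\|_{{\mathcal B}(\ell^p_w)}N^{d_{\mathcal G}}H_{A,N}(\lambda',\lambda)$, invokes solidness and the bound $\|H_{A,N}\|_{{\mathcal B}_{r,\alpha}}\lesssim N^{\alpha+d_{\mathcal G}/r}$, and then optimizes over admissible $N$.

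One step in your write-up is wrong, though it is also unnecessary: the reduction to the range $\tilde d_{\mathcal G}-d_{\mathcal G}/r<\alpha\le \tilde d_{\mathcal G}-d_{\mathcal G}/r+1$. That reduction is legitimate in Theorem \ref{mainthm} because the quantity being estimated there, $\beta_{q,w'}(A)$, does not depend on $\alpha$, so one may pass to a smaller $\alpha_0$ and use $\|A\|_{{\mathcal B}_{r,\alpha_0}}\le\|A\|_{{\mathcal B}_{r,\alpha}}$ on the right-hand side. Here the quantity being estimated is $\|A^{-1}\|_{{\mathcal B}_{r,\alpha}}$ itself, and the embedding ${\mathcal B}_{r,\alpha}\hookrightarrow{\mathcal B}_{r,\alpha_0}$ for $\alpha\ge\alpha_0$ gives $\|A^{-1}\|_{{\mathcal B}_{r,\alpha_0}}\le\|A^{-1}\|_{{\mathcal B}_{r,\alpha}}$, which is the wrong direction: a bound on $\|A^{-1}\|_{{\mathcal B}_{r,\alpha_0}}$ (or membership $A^{-1}\in{\mathcal B}_{r,\alpha_0}$) does not yield the claimed bound on, or membership in, ${\mathcal B}_{r,\alpha}$. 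The fix is simply to delete this reduction: your argument, like the paper's, works directly for every $\alpha>\tilde d_{\mathcal G}-d_{\mathcal G}/r$, since Lemma \ref{tech.lem2} and the estimate $\|H_{A,N}\|_{{\mathcal B}_{r,\alpha}}\le C_4N^{\alpha+d_{\mathcal G}/r}$ hold for the given $\alpha$, and the exponent $(\alpha+d_{\mathcal G}(1+1/r))/\min(\alpha-d_{\mathcal G}/r',1)$ in \eqref{norm-inversion} already carries the explicit $\alpha$-dependence that made the restriction necessary in Theorem \ref{mainthm}.
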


\begin{rem}\label{norm-contro-inversion.thm.remark}
{\rm
Under the assumption that the counting measure $\mu$ is Ahlfors regular in which $\tilde d_{\mathcal G}=d_{\mathcal G}$ by Proposition \ref{regular.prop}, the authors in
\cite{shinsun19} show that   Beurling algebras ${{\mathcal B}_{r,\alpha}}({\mathcal G})$ for some $1\le r\le \infty$ and $\alpha>d_{\mathcal G}(1-/r)$ admit  norm-controlled inversion  in  the symmetric $*$-algebra $\mathcal B(\ell^2)=\mathcal B(\ell^2_{w_0})$. Moreover
 for any matrix  $A\in {\mathcal B}_{r,\alpha}({\mathcal G})$ being invertible in $\mathcal B(\ell^2)$,
  \begin{align}\label{norm-inversion.old}
\|A^{-1}\|_{{\mathcal B}_{r, \alpha}}&\le C\|A^{-1}\|_{\mathcal B(\ell^2)}
\big(\|A^{-1}\|_{\mathcal B(\ell^2)} \|A\|_{{\mathcal B}_{r,\alpha}}\big)^{(\alpha+d_{\mathcal G}/r)/\min(\alpha-d_{\mathcal G}/r', 1)}
\nonumber \\&
\quad\times
\left\{
\begin{array}{ll}
1
&{\rm if }\ \alpha\neq 1+d_{\mathcal G}/r'
\\
\big(\ln\big(\|A^{-1}\|_{\mathcal B(\ell^2)} \|A\|_{{\mathcal B}_{r,\alpha}}+1\big) \big)^{(d_{\mathcal G}+1)/r'} &
{\rm if }\ \alpha= 1+d_{\mathcal G}/r',
\end{array}
\right.
\end{align}
where  $1/r'=1-1/r$ and $C$ is an absolute constant.
Therefore  under the assumption that the counting measure $\mu$ is Ahlfors regular,
the conclusion \eqref{norm-inversion.old} provides a better upper bound estimate to $\|A^{-1}\|_{{\mathcal B}_{r, \alpha}}$
than the one in \eqref{norm-inversion} with the exponent $p$ and Muckenhoupt $A_p$-weight $w$ replaced by $2$ and the trivial weight $w_0$ respectively.
}\end{rem}

 We conclude this section with a family of matrices on the lattice graph ${\mathcal Z}$
 to demonstrate the almost optimality of the  norm estimate  \eqref{norm-inversion} for the inversion.

\begin{ex}\label{exam}
{\rm
Let $A_{\kappa}:=(a_{\kappa}(n-n'))_{n,n'\in \ZZ }$, where $\kappa\in (0, 1)$ is a constant sufficiently close to one,
and
$$a_\kappa(n)= \left\{\begin{array}{ll} 1 & {\rm if} \ n=0\\
-\kappa  &  {\rm if} \ n=1\\
0 &  {\rm otherwise}.
\end{array}
\right. $$
Then its inverse $(A_\kappa)^{-1}=(\check a_{\kappa}(n-n'))_{n, n'\in \ZZ}$ is given by
$$\check a_\kappa(n)= \left\{\begin{array}{ll} \kappa^n & {\rm if} \ n\ge 0\\
0 &  {\rm otherwise}.
\end{array}
\right. $$
For $1\le r\le \infty$,  we have
\begin{equation}\label{Agamma-bound}
\|A_{\kappa}\|_{{\mathcal B}_{r,\alpha}}  
\approx 1  
\ \ {\rm and} \ \
\|A_\kappa^{-1}\|_{{\mathcal B}_{r, \alpha}} \approx (1-\kappa)^{-\alpha-1/r}.
\end{equation}

Let $w_\theta=((|n|+1)^\theta)_{n\in \ZZ}, -1<\theta<p-1$. Then
$w_\theta$ is a Muckenhoupt $A_p$-weight and
\begin{equation}\label{weighted.ex1}
\|A_\kappa^{-1}\|_{{\mathcal B}({\ell^p_{w_\theta}})}\lesssim \|A_\kappa^{-1}\|_{{\mathcal B}_{1,0}}\lesssim (1-\kappa)^{-1}.
\end{equation}
Take $c_0=(c_0(n))_{n\in \ZZ}$, where 
\begin{equation*}
c_0(n):=
\left\{
\begin{array}{ll} \kappa^n
& {\rm if } \ n\ge 0 \\
0
& {\rm otherwise. }
\end{array}
\right.
\end{equation*}
Therefore
\begin{equation} \label{weighted.ex2}
\|c_0\|_{p, w_\theta}= \Big(\sum_{n=0}^\infty \kappa^{np} (n+1)^\theta\Big)^{1/p}
\approx (1-\kappa)^{-(\theta+1)/p}
\end{equation}
and
\begin{equation} \label{weighted.ex3}
\|A_\kappa^{-1} c_0\|_{p, w}=
\Big(\sum_{n=0}^\infty \kappa^{nq} (n+1)^p (n+1)^\theta\Big)^{1/p}\approx (1-\kappa)^{-(\theta+p+1)/p}.
\end{equation}
By \eqref{weighted.ex1}, \eqref{weighted.ex2} and \eqref{weighted.ex3}, we have
\begin{equation}\label{weighted.ex4}
\|A_\kappa^{-1}\|_{{\mathcal B}(\ell^p_{w_\theta})}\approx (1-\kappa)^{-1}.
\end{equation}
Combining \eqref{Agamma-bound} and \eqref{weighted.ex4} yields
\begin{equation}
\|A_\kappa^{-1}\|_{{\mathcal B}_{r, \alpha}} \approx \|A_\kappa^{-1}\|_{\mathcal B(\ell^p_{w_\theta})}
\big(\|A_\kappa^{-1}\|_{\mathcal B(\ell^p_{w_\theta})} \|A_\kappa\|_{{\mathcal B}_{r,\alpha}}\big)^{(\alpha+d_{\mathcal G}/r-d_{\mathcal G})},
\end{equation}
while the estimate \eqref{norm-inversion} in Theorem \ref{norm-contro-inversion.thm} for $\alpha>1+d_{\mathcal G}(1-1/r)$ is
\begin{equation}
\|A_\kappa^{-1}\|_{{\mathcal B}_{r, \alpha}} \lesssim \|A_\kappa^{-1}\|_{\mathcal B(\ell^p_{w})}
\big(\|A_\kappa^{-1}\|_{\mathcal B(\ell^p_{w})} \|A_\kappa\|_{{\mathcal B}_{r,\alpha}}\big)^{(\alpha+d_{\mathcal G}/r+d_{\mathcal G})}.
\end{equation}
}\end{ex}

 \section{Proofs}\label{proofs.section}

 In this section, we collect the proofs of Propositions
 \ref{regular.prop}, \ref{beurling.prop}, \ref{beurlingonmaximalsets.pr} and \ref{weighted.prop},
 Lemmas \ref{tech.lem} and \ref{tech.lem2},
and  Theorem \ref{norm-contro-inversion.thm}.
\subsection{Proof of Proposition \ref{regular.prop}} \label{regular.prop.pfsection}
By \eqref{V1.pro}, it suffices to establish \eqref{strongpolynomial-growth}
for $N\ge 1$ and $R\ge 3$.
Let $V_N$ be a maximal $N$-disjoint set, and  define
$A_R(\lambda, N)$  as in \eqref{AR.def}. Then we obtain from \eqref{maximum2} and \eqref{ahlfors.def} that
\begin{eqnarray*}
\mu\big(A_R(\lambda, N)\big) & \le &
\frac{\sum_{\lambda_m\in A_R(\lambda, N)} \mu(B(\lambda_m, N))}
{\inf_{\lambda_{m'} \in A_R(\lambda, N)} \mu(B(\lambda_{m'}, N))}\le
B_3^{-1} N^{-d_0} \mu \big(\cup_{\lambda_m\in A_R(\lambda, N)} B(\lambda_m, N)\big)\nonumber\\
& \le &  B_3^{-1} N^{-d_0} \mu\big(B(\lambda, N+ (N+1)R\big)
\le 2^{d_0} B_3^{-1}  B_4 (R+1)^{d_0},
\end{eqnarray*}
which implies that
\begin{equation}\label{regular.prop.eq1}
\tilde d_{\mathcal G}\le d_0.
\end{equation}
Similarly by \eqref{overlap-counting} and \eqref{ahlfors.def}, we get
\begin{eqnarray*}
\mu\big(A_R(\lambda, N)\big) & \ge &
\frac{\sum_{\lambda_m\in A_R(\lambda, N)} \mu(B(\lambda_m, 2N))}
{\max_{\lambda_m \in A_R(\lambda, N)} \mu(B(\lambda_m, 2N))}\ge
B_4^{-1} (2N+1)^{-d_0} \mu \big(\cup_{\lambda_m\in A_R(\lambda, N)} B(\lambda_m, 2N)\big)\nonumber\\
& \ge &  3^{-d_0}B_4^{-1} N^{-d_0} \mu(B(\lambda, N (R-2) )
\ge 2^{-2d_0}3^{-d_0}B_4^{-1}  B_3  (R+1)^{d_0}, R\ge 3,
\end{eqnarray*}
where the third inequality holds as $B(\lambda_m, 2N))\cap B(\lambda, N(R-2))=\emptyset$ for all $\lambda_m\not\in A_R(\lambda, N)$.
This show that
\begin{equation} \label{regular.prop.eq2}
\tilde d_{\mathcal G}\ge d_0.
\end{equation}
Combining \eqref{regular.prop.eq1} and \eqref{regular.prop.eq2} 
  completes the proof.

\subsection{Proof of Proposition \ref{beurling.prop}}\label{beurling.prop.pfsection}
 The conclusion (i) is obvious and the conclusions in (ii), (iii) and (iv) are presented in
\cite[Propositions 3.3 and 3.4]{shinsun19}.
 Now we prove the conclusion (v).
Write $A=(a(\lambda, \lambda'))_{\lambda, \lambda'\in V}$ and set $h_A(n)=\sup_{\rho(\lambda, \lambda')\ge n} |a(\lambda,  \lambda')|, n\ge 0$. Then for $K\ge 1$  and $1<r\le \infty$,  we have

\begin{eqnarray*}
\|A-A_K\|_{{\mathcal B}_{1, 0}} &
\le &  2 \sum_{n=\lceil (K+1)/2\rceil}^\infty h_A(n)(n+1)^{d_{\mathcal G}-1}\le   2 \|A\|_{{\mathcal B}_{r,\alpha}}  \Big(\sum_{n=\lceil (K+1)/2\rceil}^\infty (n+1)^{-\alpha r'+d_{\mathcal G}-1}\Big)^{1/r'}
\nonumber\\
& \le &
 2 \|A\|_{{\mathcal B}_{r, \alpha}}
\Big(\int_{\lceil (K+1)/2\rceil}^\infty x^{d_{\mathcal G}-1-\alpha r'} dx
\Big)^{1/r'}
\le
\frac{2^{\alpha -d_{\mathcal G}/r'+1}}{(\alpha r' -d_{\mathcal G})^{1/r'}} \|A\|_{{\mathcal B}_{r,\alpha}}
K^{-\alpha +d_{\mathcal G}/r'},
\end{eqnarray*}
where   $r'=r/(r-1)$. This proves \eqref{AN-appr} for $1<r\le \infty$. Similarly we can prove
\eqref{AN-appr} for $r=1$.

\subsection{Proof of Proposition \ref{beurlingonmaximalsets.pr}}\label{beurlingonmaximalsets.pr.pfsection}
  The conclusion (i) follows from the definition of Beurling algebras on the graph ${\mathcal G}$ and on its
  maximal disjoint set $V_N$.

  Now we prove the conclusion (ii).  Set $\check\alpha= \alpha+ (\tilde d_{\mathcal G}-d_{\mathcal G})/r$.
   For $1\le r<\infty$, we obtain
    \begin{eqnarray}\label{Bra.eqN1}
\|A\|_{{\mathcal B}_{r, \check\alpha}}^r
& \le  & \sum_{n'=0}^\infty \sum_{n=n'(N+1)}^{(n'+1)(N+1)-1} (n+1)^{\alpha r +\tilde d_{\mathcal G}-1} \Big(\sup_{\rho(\lambda, \lambda')\ge n}
\sum_{\lambda_m\in B(\lambda, 2N), \lambda_k\in B(\lambda', 4N)} |b(\lambda_m, \lambda_k)|\Big)^r\nonumber\\
& \le &  (N+1)^{\alpha r+\tilde d_{\mathcal G}} \sum_{n'=0}^\infty  (n'+1)^{\alpha r+\tilde d_{\mathcal G}-1}
\Big(\sup_{\rho(\lambda, \lambda')\ge  n'(N+1)}
\sum_{\lambda_m\in B(\lambda, 2N), \lambda_k\in B(\lambda', 4N)} |b(\lambda_m, \lambda_k)|\Big)^r\nonumber\\
& \le & (D(\mu))^{7r}
 (N+1)^{\alpha r+\tilde d_{\mathcal G}} \sum_{n'=0}^\infty  (n'+1)^{ \alpha r+\tilde d_{\mathcal G}-1}
\Big(\sup_{\rho(\lambda_m, \lambda_k)\ge \max(n'-6, 0)(N+1)}
 |b(\lambda_m, \lambda_k)|\Big)^r\nonumber\\
 & \le & 8^{\alpha r+\tilde d_{\mathcal G}} (D(\mu))^{7r} N^{\alpha r+\tilde d_{\mathcal G}} \|B\|_{{\mathcal B}_{r,  \alpha; N}}^r,
    \end{eqnarray}
    where the third inequality follows from \eqref{overlap-counting}.
This    proves \eqref{beurlingonmaximalsets.pr.eq3} and the conclusion (ii) for $1\le r<\infty$.

 Similarly for $r=\infty$,  we have
    \begin{eqnarray}\label{Bra.eqN2}
\|A\|_{{\mathcal B}_{\infty,  \check\alpha}}& \le & \sup_{\lambda, \lambda'\in V}
 \sum_{\lambda_m\in B(\lambda, 2N), \lambda_k\in B(\lambda', 4N)} |b(\lambda_m, \lambda_k)|
(\rho(\lambda, \lambda')+1)^{\alpha} \nonumber\\
& \le & \|B\|_{\infty, \alpha; N}
\sup_{\lambda, \lambda'\in V}
 \sum_{\lambda_m\in B(\lambda, 2N),\lambda_k\in B(\lambda', 4N)} (\lfloor\rho(\lambda_m, \lambda_k)/N\rfloor+1)^{-\alpha}
(\rho(\lambda, \lambda')+1)^{ \alpha}\nonumber\\
& \le  &   8^\alpha   (D(\mu))^7 N^{\alpha} \|B\|_{\infty,  \alpha; N}.
    \end{eqnarray}
Combining \eqref{Bra.eqN1} and \eqref{Bra.eqN2} proves \eqref{beurlingonmaximalsets.pr.eq3} and the conclusion (ii).

Finally we prove the conclusion (iii).
Set $\tilde \alpha= \alpha- (\tilde d_{\mathcal G}-d_{\mathcal G})/r$ and $1/r'=1-1/r$.
Then for $1< r< \infty$, we have
\begin{eqnarray}\label{beurlingonmaximalsets.pr.pf.eq5}
\|S_{A, N}\|_{\mathcal{B}_{r,\tilde \alpha;N}} &\le &
 N^{d_{\mathcal G}  } \Big( \sum_{n=13}^\infty \big(h_A(n(N+1)/2)\big)^r (n+1)^{\alpha r + d_{\mathcal G}-1}\Big)^{1/r}\nonumber\\
 & & +
 N^{-1} \Big(\sum_{n=0}^{2N} h_A(n) (n+1)^{d_{\mathcal G}}\Big) \times \Big(\sum_{n=0}^{12} (n+1)^{ \alpha r+ d_{\mathcal G}-1}\Big)^{1/r}
\nonumber \\ & \le & 2^{2\alpha +(2 d_{\mathcal G}-1)/r} N^{-\alpha +d_{\mathcal G}/r'}
\Big(\sum_{m=4N}^\infty  ( h_A(m))^{r} (m+1)^{\alpha r+d_{\mathcal G}-1}\Big)^{1/r}\nonumber\\
 & &  +13^{\alpha +d_{\mathcal G}/r} N^{-1} \sum_{n=0}^{2N} h_A(n) (n+1)^{d_{\mathcal G}} \nonumber\\
 & \le & 2^{2\alpha +(2 d_{\mathcal G}-1)/r} N^{-\alpha +d_{\mathcal G}/r'}
 \|A\|_{{\mathcal B}_{r, \alpha}} \nonumber\\
 & &
  + 13^{\alpha +d_{\mathcal G}/r}\|{ A}\|_{{\mathcal B}_{r,\alpha}} N^{-1} \Big(\sum_{n=0}^{2N} (n+1)^{-(\alpha-1) r'+ d_{\mathcal G}-1}\Big)^{1/r'} \nonumber\\
 & \le &  2^{2\alpha +(2 d_{\mathcal G}-1)/r} N^{-\alpha +d_{\mathcal G}/r'}\|{A}\|_{{\mathcal B}_{r,\alpha}}
 +
13^{\alpha +d_{\mathcal G}/r} \|{A}\|_{{\mathcal B}_{r,\alpha}}
\nonumber\\
& &  \times \left\{\begin{array}{ll}
 2 \Big(\frac{1+|\alpha-1-d_{\mathcal G}/r'|}{|\alpha-1-d_{\mathcal G}/r'|}\Big)^{1/r'} N^{-\min(1,\alpha-d_{\mathcal G}/r')} & {\rm if} \ \alpha\ne d_{\mathcal G}/r'+1\\
 3^{1/r'} N^{-1} (\ln (N+1))^{1/r'}  & {\rm if} \ \alpha= d_{\mathcal G}/r'+1.
\end{array}\right.  
\end{eqnarray}
This proves  \eqref{SAN.estimate} for $1<r<\infty$.
Using similar argument, we can prove \eqref{SAN.estimate} for $r=1, \infty$.

\subsection{Proof of Proposition \ref{weighted.prop}}\label{weighted.prop.pfsection}
 Write $ A=(a(\lambda, \lambda'))_{\lambda, \lambda'\in V}$, 
 and set
   $h_A(n)=\sup_{\rho(\lambda, \lambda')\ge n} |a(\lambda, \lambda')|$, $n\ge 0$.
Then for any $c\in \ell^p_w$ with $1<p<\infty$, we have
\begin{eqnarray}\label{h-sum2}
\|Ac \|_{p,w} &\le &
\Big( \sum_{\lambda \in V} \Big(
\sum_{\lambda' \in V} h_A(\rho(\lambda, \lambda'))|c(\lambda')|\Big)^p w(\lambda)\Big)^{1/p}
\nonumber \\
&
\le &
 h_A(0) \|c\|_{p,w}
+\Bigg(\sum_{\lambda \in V}
\Big( \sum_{l=1}^\infty h_A(2^{l-1})  \sum_{
 2^{l-1} \le \rho(\lambda, \lambda')<2^l}
|c(\lambda')|\Big)^p w(\lambda) \Bigg)^{1/p}
\nonumber \\
&\le &  h_A(0) \|c\|_{p,w}+ \Big( \sum_{l=1}^\infty h_A(2^{l-1})2^{ld_{\mathcal G}}\Big)^{1-1/p}\nonumber\\
& & \times
\Bigg(\sum_{l=1}^\infty h_A(2^{l-1})2^{-(p-1)l d_{\mathcal G} }
\sum_{\lambda \in V}  \Big(\sum_{
 2^{l-1} \le \rho(\lambda, \lambda')<2^l}
|c(\lambda')|\Big)^{p} w(\lambda) \Bigg)^{1/p}.
\end{eqnarray}
By the equivalent definition \eqref{Aq-char} 
 of the Muckenhoupt  $A_p$-weight $w$ and the polynomial property \eqref{polynomial-growth} of the counting measure $\mu$,
we obtain
\begin{eqnarray*}\label{h-sum2+}
 & &
\sum_{\lambda \in V}  \Big(\sum_{
 \rho(\lambda, \lambda')<2^l}
|c(\lambda')|\Big)^{p} w(\lambda) \nonumber\\
 & \le &
 \sum_{\lambda \in V}
\Big(\sum_{ 
\rho(\lambda, \lambda')<2^l}
|c(\lambda')|^p  w(\lambda')\Big) \Big(\sum_{
 \rho(\lambda, \lambda^{\prime\prime})<2^l} ( w(\lambda^{\prime\prime}))^{-1/(p-1)}\Big)^{p-1}  w(\lambda)
\nonumber\\
&\le & A_p(w)
\sum_{\lambda\in V}  w(\lambda)
\Big(  \sum_{\rho(\lambda', \lambda)<2^l}
|c(\lambda')|^p  w(\lambda')\Big)
\times
 \frac{  \big(\mu(B(\lambda, 2^{l+1}-2))\big)^p} {\sum_{\rho(\lambda, \lambda^{\prime\prime})\le 2^{l+1}-2} w(\lambda^{\prime\prime})}
\nonumber\\
&\le  &   A_p(w) (D_{\mathcal G})^p 2^{p(l+1) d_{\mathcal G} }
\sum_{\lambda'\in V}
|c(\lambda')|^p  w(\lambda') \times  \sum_{\rho(\lambda, \lambda')<2^l}
 \frac{ w(\lambda)} {\sum_{\rho(\lambda', \lambda^{\prime\prime})\le 2^{l}-1} w(\lambda^{\prime\prime})}\nonumber\\
 & = &   A_p(w) (D_{\mathcal G})^p 2^{p(l+1) d_{\mathcal G} } \|c\|_{p, w}^p.
 \end{eqnarray*}
This together with \eqref{h-sum2}  
and the following estimate
\begin{equation*}\label{hA.eq00}
h_A(0)+\sum_{l=1}^\infty h(2^{l-1}) 2^{ld_{\mathcal G}}\le  h_A(0)+ 2^{2d_{\mathcal G}} \sum_{l=1}^\infty \sum_{2^{l-2}<n\le 2^{l-1}} h_A(n) (n+1)^{d_{\mathcal G}-1}\le  2^{2d_{\mathcal G}} \|A\|_{{\mathcal B}_{1, 0}}
\end{equation*}
proves \eqref{continuity} for $1<p<\infty$.


Applying a similar argument as above,
we can verify \eqref{continuity} for $p=1$.

\subsection{Proof of Lemma \ref{tech.lem}}\label{tech.lem.pfsection}
We follow the procedure used in \cite{shinsun19}, where a similar result is established for the unweighted case.
Take $\lambda_m\in V_N$. Denote the commutator between
the smooth truncation operator $\Psi_{\lambda_m}^{2N}$ and the matrix ${A}$ by  $[\Psi_{\lambda_m}^{2N}, {A}]:=\Psi_{\lambda_m}^{2N}{A}- {A} \Psi_{\lambda_m}^{2N}$, and set
$\Phi^{2N}:=\big(\sum_{\lambda_k\in V_N}\Psi_{\lambda_k}^{2N}\big)^{-1}$.
  Replacing $c$ in \eqref{mainthm.eq-1}  by $\Psi_{\lambda_m}^{2N} c$
 and applying Proposition \ref{weighted.prop}, we have
 \begin{eqnarray}\label{lowerbound1}
\beta_{p, w}(A) \|\Psi_{\lambda_m}^{2N} c\|_{p,w}
 & \le & \| A\Psi_{\lambda_m}^{2N} c\|_{p,w}
\le \|\Psi_{\lambda_m}^{2N} A c\|_{p,w}
+
\|[\Psi_{\lambda_m}^{2N}, A] c\|_{p,w}\nonumber\\
& \le &  \|\Psi_{\lambda_m}^{2N} A c\|_{p,w}+
 \|\chi_{\lambda_m}^{4N}
[\Psi_{\lambda_m}^{2N}, {A}]c \|_{p,w}
+ \|(I-\chi_{\lambda_m}^{4N}) {A} \chi_{\lambda_m}^{3N}
\Psi_{\lambda_m}^{2N}c\|_{p,w}
\nonumber \\
& \le & \|\Psi_{\lambda_m}^{2N} A c\|_{p,w}+
 \sum_{\lambda_k \in V_{N}}
\|\chi_{\lambda_m}^{4N}[\Psi_{\lambda_m}^{2N}, {A}] \chi_{\lambda_k}^{3N}
\Phi^{2N}\Psi_{\lambda_k}^{2N}c\|_{p,w}\nonumber\\
 & & + 2^{3d_{\mathcal G}} D_{\mathcal G}  \big(A_p(w)\big)^{1/p}
 \|(I-\chi_{\lambda_m}^{4N}) {A} \chi_{\lambda_m}^{3N}\|_{{\mathcal B}_{1, 0}}
 \|\Psi_{\lambda_m}^{2N}c\|_{p,w},
\end{eqnarray}
where   the second inequality holds, as
$(I-\chi_{\lambda_m}^{4N}) [\Psi_{\lambda_m}^{2N}, {A}]=
(I-\chi_{\lambda_m}^{4N}) A \Psi_{\lambda_m}^{2N}= (I-\chi_{\lambda_m}^{4N}) A \chi_{\lambda_m}^{3N}\Psi_{\lambda_m}^{2N}$
by the supporting properties for $\chi_{\lambda_m}^{3N}, \chi_{\lambda_m}^{4N}$ and  $\Psi_{\lambda_m}^{2N}$.
From \eqref{solid.eq} and  \eqref{AN-appr} in  Proposition \ref{beurling.prop}, we obtain
\begin{equation}\label{lowerbound1+}
\|(I-\chi_{\lambda_m}^{4N}) {A} \chi_{\lambda_m}^{3N}\|_{{\mathcal B}_{1, 0}}
\le \|A-{A}_{N}\|_{{\mathcal B}_{1, 0}}\le C_0 \|A\|_{{\mathcal B}_{r,\alpha}} N^{-\alpha  +d_{\mathcal G}(1-1/r)}.
\end{equation}
 Combining
\eqref{lowerbound1} and \eqref{lowerbound1+} yields
 \begin{eqnarray*} 
\beta_{p, w}(A)  \|\Psi_{\lambda_m}^{2N} c\|_{p,w}
& \le & \|\Psi_{\lambda_m}^{2N} A c\|_{p,w}+
 \sum_{\lambda_k \in V_{N}}
\|\chi_{\lambda_m}^{4N}[\Psi_{\lambda_m}^{2N}, {A}] \chi_{\lambda_k}^{3N}
\Phi^{2N}\Psi_{\lambda_k}^{2N}c\|_{p,w}\nonumber\\
 & & + 2^{3d_{\mathcal G}}  C_0 D_{\mathcal G}   (A_p(w))^{1/p}
 \|A\|_{{\mathcal B}_{r,\alpha}} N^{-\alpha  +d_{\mathcal G}(1-1/r)}
 \|\Psi_{\lambda_m}^{2N}c\|_{p,w}.
\end{eqnarray*}
This together with \eqref{tech.lem.eq1} proves that
 \begin{equation} \label{lowerbound1++}
\beta_{p, w}(A)  \|\Psi_{\lambda_m}^{2N} c\|_{p,w}
\le 2 \|\Psi_{\lambda_m}^{2N} A c\|_{p,w}+
2 \sum_{\lambda_k \in V_{N}}
\|\chi_{\lambda_m}^{4N}[\Psi_{\lambda_m}^{2N}, {A}] \chi_{\lambda_k}^{3N}
\Phi^{2N}\Psi_{\lambda_k}^{2N}c\|_{p,w}.
\end{equation}

For $\lambda_k\in V_{N}$ with
$\rho(\lambda_m, \lambda_k)> 12(N+1)$, we obtain from  the finite covering property \eqref {overlap-counting} for the maximal $N$-disjoint set $V_N$,
the  equivalent definition \eqref{Aq-char} of the weight $w$,
the polynomial growth property  \eqref{polynomial-growth} of the counting measure $\mu$, and the monotonicity of $h_A(n), n\ge 0$,
  that
\begin{eqnarray}\label{lowerbound4}
 & & \|\chi_{\lambda_m}^{4N } [\Psi_{\lambda_m}^{2N}, {A}]\chi_{\lambda_k}^{3N}\Phi^{2N}
\Psi_{\lambda_k}^{2N} c\|_{p,w}
 =  \|\Psi_{\lambda_m}^{2N} {A}\chi_{\lambda_k}^{3N}\Phi^{2N}\Psi_{\lambda_k}^{2N} c\|_{p,w}
\nonumber \\&
\le &
h_{ A}\Big(\frac{\rho(\lambda_m, \lambda_k)}{2}\Big)
\Bigg(\sum_{\lambda\in B(\lambda_m, 4N)}
\Big(\sum_{\lambda'\in B(\lambda_k, 4N)}
|\Psi_{\lambda_k}^{2N} c(\lambda')|\Big)^p w(\lambda)
\Bigg)^{1/p}
\nonumber \\&
\lesssim & 
(A_p(w))^{1/p} S_{A, N}(\lambda_m, \lambda_k)
\|\Psi_{\lambda_k}^{2N}c \|_{p,w}
\Bigg( \frac{w\big(B(\lambda_m, 4N)\big)}{w\big(B(\lambda_k, 4N)\big)}\Bigg)^{1/p}.
\end{eqnarray}

Set
$\tilde A_M=\big(|a(\lambda, \lambda')| \rho(\lambda, \lambda')\chi_{[0, M]}(\rho(\lambda, \lambda'))\big)_{\lambda, \lambda'\in V},\ M\ge 0$.
 For  $\lambda_k \in V_{N}$ with $\rho(\lambda_m, \lambda_k)< 12(N+1)$,
we have
\begin{eqnarray}\label{8N-bound}
 \|\chi_{\lambda_m}^{4N}
[\Psi_{\lambda_m}^{2N}, {A}]\chi_{\lambda_k}^{3N}\Phi^{2N}\Psi_{\lambda_k}^{2N} c\|_{p,w}
& \lesssim  &  
\big( A_p(w)\big)^{1/p}
\|\chi_{\lambda_m}^{4N}[\Psi_{\lambda_m}^{2N}, {A}]\chi_{\lambda_k}^{3N}\|_{{\mathcal B}_{1, 0}}
\|\Phi^{2N}\Psi_{\lambda_k}^{2N} c\|_{p,w}\nonumber\\
&\lesssim&
 \big(A_p(w)\big)^{1/p}  N^{-1}
\|\tilde A_{19N+12}\|_{{\mathcal B}_{1, 0}}
\|\Psi_{\lambda_k}^{2N} c\|_{p,w}, 
\end{eqnarray}
where the first inequality
 follows from  the weighted norm inequality \eqref{continuity} in
Proposition \ref{weighted.prop},  and the second one holds by
 the solidness of the Banach algebra ${\mathcal B}_{1, 0}({\mathcal G})$ in Proposition \ref{beurling.prop} and the Lipschitz property for the  trapezoid function $\psi_0$. 
 Observe that  
 \begin{equation}\label{8N-bound+1}
 w\big(B(\lambda_k, 4N)\big)\le w\big(B(\lambda_m, 19N+12)\big)\le  (D(\mu))^{3p}  A_p(w)  w\big(B(\lambda_m, 4N)\big)
  \end{equation}
    by the double property \eqref{weighteddoubling} for the $A_p$-weight $w$, and
\begin{eqnarray} \label{8N-bound+2} \|\tilde A_{19N+12}\|_{{\mathcal B}_{1, 0}}
& = & \sum_{n=0}^{19N+12}  (n+1)^{d_{\mathcal G}-1} \sup_{n\le \rho(\lambda, \lambda')\le 19N+12} |a(\lambda, \lambda')| \rho(\lambda, \lambda')\nonumber\\
 & \le & 2 \sum_{n=0}^{19N+12} (n+1)^{d_{\mathcal G}-1} \sum_{n/2\le m\le 19N+12} h_{A} (m)
\lesssim  
 \sum_{n=0}^{2N} h_A(n) (n+1)^{d_{\mathcal G}}
\end{eqnarray}
by the monotonicity of $h_A(n), n\ge 0$.
Combining \eqref{8N-bound}, \eqref{8N-bound+1}  and \eqref{8N-bound+2}, we get
\begin{equation}\label{8N-bound+3}
 \|\chi_{\lambda_m}^{4N}
[\Psi_{\lambda_m}^{2N}, {A}]\chi_{\lambda_k}^{3N}\Phi^{2N}\Psi_{\lambda_k}^{2N} c\|_{p,w}
  \lesssim  \big(A_p(w)\big)^{2/p}  
S_{A, N}(\lambda_m, \lambda_k)
 \|\Psi_{\lambda_k}^{2N}c \|_{p,w}
\Bigg( \frac{w\big(B(\lambda_m, 4N)\big)}{w\big(B(\lambda_k, 4N)\big)}\Bigg)^{1/p}
\end{equation}
if $\rho(\lambda_m, \lambda_k)\le 12 (N+1)$.

Combining  \eqref{lowerbound1++},
\eqref{lowerbound4} and \eqref{8N-bound+3} proves \eqref{tech.lem.eq2}.

\subsection{Proof of Lemma \ref{tech.lem2}}\label{tech.lem2.pfsection}
Set
$\alpha_{\lambda_m}:= w\big(B(\lambda_m, 4N)\big), \lambda_m\in V_N$,
and write
$$\big(S_{A, N}\big)^l:=\big(S_{A, N; l}(\lambda_m, \lambda_k)\big)_{\lambda_m, \lambda_k\in V_N}, l\ge 1.$$
By \eqref{tech.lem2.eq1},  the integer $N$ satisfies
\eqref{tech.lem.eq1} and hence \eqref{tech.lem.eq2} holds by Lemma \ref{tech.lem}.
Applying \eqref{tech.lem.eq2} repeatedly, we get
\begin{eqnarray}\label{tech.lem.eq2++}
 & &   \big(\alpha_{\lambda_m}\big)^{-1/p}
\|\Psi_{\lambda_m}^{2N} c\|_{p,w} \nonumber\\
& \le & 2 (\beta_{p, w}(A))^{-1} \big(\alpha_{\lambda_m}\big)^{-1/p}
\|\Psi_{\lambda_m}^{2N} {A}c\|_{p,w} \nonumber\\
& &
+ C_2 \big(A_p(w)\big)^{2/p} (\beta_{p, w}(A))^{-1}
\sum_{\lambda_k\in V_N} S_{A, N}(\lambda_m, \lambda_k)  \big(\alpha_{\lambda_k}\big)^{-1/p} \|\Psi_{\lambda_k}^{2N} c\|_{p,w}\nonumber\\
& \le & \cdots\nonumber\\
& \le &  2 (\beta_{p, w}(A))^{-1} \big(\alpha_{\lambda_m}\big)^{-1/p}
\|\Psi_{\lambda_m}^{2N} {A}c\|_{p,w}+ 2 (\beta_{p, w}(A))^{-1}\nonumber\\
& & \times  \sum_{l=1}^{L-1}
\big(C_2 \big(A_p(w)\big)^{2/p} (\beta_{p, w}(A))^{-1}\big)^l
\sum_{\lambda_k\in V_N} S_{A, N; l}(\lambda_m, \lambda_k)  \big(\alpha_{\lambda_k}\big)^{-1/p} \|\Psi_{\lambda_k}^{2N} Ac\|_{p,w}\nonumber\\
 & & +  (C_2 \big(A_p(w)\big)^{2/p} (\beta_{p, w}(A))^{-1}\big)^L
\sum_{\lambda_k\in V_N} S_{A, N; L}(\lambda_m, \lambda_k)  \big(\alpha_{\lambda_k}\big)^{-1/p} \|\Psi_{\lambda_k}^{2N} c\|_{p,w},
\end{eqnarray}
where $L\ge 2$.
Define
\begin{equation}\label{wan.def0}
W_{A, N}= 2 I+2 \sum_{l=1}^\infty \big(C_2 \big(A_p(w)\big)^{2/p} (\beta_{p, w}(A))^{-1}\big)^l (S_{A, N})^l.
\end{equation}
Then by \eqref{norm-power} and \eqref{tech.lem2.eq1}, we have
\begin{eqnarray}\label{wan.estimate++}
\|W_{A, N}\|_{{\mathcal B}_{r, \alpha- (\tilde d_{\mathcal G}-d_{\mathcal G})/r; N}} & \le &
2+2\sum_{l=1}^\infty \big(C_2  C_3 \big(A_p(w)\big)^{2/p} (\beta_{p, w}(A))^{-1} \|{A}\|_{{\mathcal B}_{r,\alpha}}\big)^l\nonumber\\
 & & \quad \times \left\{\begin{array}{ll} N^{-\min(1,\alpha-d_{\mathcal G}/r')l } & {\rm if} \ \alpha\ne d_{\mathcal G}/r'+1\\
N^{-l} (\ln (N+1))^{l/r'}  & {\rm if} \ \alpha= d_{\mathcal G}/r'+1,
\end{array}\right.   \nonumber\\
&  \le &  2+2\sum_{l=1}^\infty 2^{-l} =4.\end{eqnarray}

Following the argument used in the proof of Proposition \ref{weighted.prop}, we obtain
\begin{eqnarray}
 & & \Big( \sum_{\lambda_m\in V_N} \Big|
\sum_{\lambda_k\in V_N} S_{A, N; L}(\lambda_m, \lambda_k)  \big(\alpha_{\lambda_k}\big)^{-1/p} \|\Psi_{\lambda_k}^{2N} c\|_{p,w}\Big|^p
\alpha_{\lambda_m}\Big)^{1/p}\nonumber\\
& \lesssim  & \big \| (S_{A, N})^L\big\|_{{\mathcal B}_{1, 0; N}}
\Big(\sum_{\lambda_k\in V_N} \Big|\big(\alpha_{\lambda_k}\big)^{-1/p} \|\Psi_{\lambda_k}^{2N} c\|_{p,w}\Big|^p \alpha_{\lambda_k}\Big)^{1/p}\nonumber\\
& \le &  C_6 \big\| (S_{A, N})^L\big\|_{{\mathcal B}_{r, \alpha- (\tilde d_{\mathcal G}-d_{\mathcal G})/r; N}} \|c\|_{p, w},
\end{eqnarray}
where $C_6$ is an absolute constant.
This together with
\eqref{norm-power} and \eqref{tech.lem2.eq1}
implies that
\begin{eqnarray}\label{wan.estimate++2}
& & \Big(\sum_{\lambda_m\in V_N} \Big|
\sum_{\lambda_k\in V_N} S_{A, N; L}(\lambda_m, \lambda_k)  \big(\alpha_{\lambda_k}\big)^{-1/p} \|\Psi_{\lambda_k}^{2N} c\|_{p,w}\Big|^p
\alpha_{\lambda_m}\Big)^{1/p}\nonumber\\
& &\quad \times \Big(C_2 \big(A_p(w)\big)^{2/p} (\beta_{p, w}(A))^{-1}\Big)^L
\le  C_6 2^{-L} \|c\|_{p, w}\to 0\ {\rm as} \ L\to \infty.
\end{eqnarray}
Taking limit $L\to \infty$ in \eqref{tech.lem.eq2++} and applying \eqref{wan.estimate++} and \eqref{wan.estimate++2}, we obtain
\begin{eqnarray} \label{wan.estimate++3}
 & &  \beta_{p, w}(A) \big(\alpha_{\lambda_m}\big)^{-1/p}
\|\Psi_{\lambda_m}^{2N} c\|_{p,w} \le \sum_{\lambda_k\in V_N} W_{A,N}(\lambda_m, \lambda_k)
\big(\alpha_{\lambda_k}\big)^{-1/p}
\|\Psi_{\lambda_k}^{2N} Ac\|_{p,w}, \ \lambda_m\in V_N,
\end{eqnarray}
where $c\in \ell^p_w$.

Define
$H_{A, N}:=\big(H_{A, N}(\lambda, \lambda')\big)_{\lambda, \lambda'\in V}$ by
\begin{equation}\label{def-H}
H_{A, N}(\lambda, \lambda'):=\sum_{\lambda_m\in B(\lambda, 2N)}
\sum_{\lambda_k \in B(\lambda', 4N)}
W_{{A}, N}(\lambda_m, \lambda_k ).
\end{equation}
Then the desired norm estimate \eqref{tech.lem2.eq2} 
 follows from \eqref{solid.eq},
\eqref{beurlingonmaximalsets.pr.eq3} and \eqref{wan.estimate++}.

Let  $\lambda\in V$ and select
 $\lambda_m\in V_{N}$ such that $\lambda\in B(\lambda_m, 2N)$. Such a fusion vertex
$\lambda_m$ exists by the covering property \eqref{overlap-counting}.
Replacing the vector $(c(\lambda'))_{\lambda'\in V}$ and the ball $B$ by the delta vector $(\delta_0(\lambda, \lambda'))_{\lambda'\in V}$ and $B(\lambda_m, 4N)$
in \eqref{Aq-char}, respectively, we  get
\begin{equation}\label{est-c}
\alpha_{\lambda_m}  
\lesssim  A_p(w)  N^{pd_{\mathcal G}}
w(\lambda).
\end{equation}
Combining  \eqref{tech.lem.eq2}, \eqref{wan.estimate++3} and \eqref{est-c}, we obtain
\begin{eqnarray}\label{preresult}
|c(\lambda)|&\lesssim &
( A_p(w))^{1/p} N^{d_{\mathcal G}} \sum_{\lambda_m\in B(\lambda, 2N)} \alpha_{\lambda_m}^{-1/p}
\|\Psi_{\lambda_m}^{2N}c \|_{p,w}
\nonumber \\
& \lesssim &  ( A_p(w))^{1/p} (\beta_{p, w}(A))^{-1}
 N^{d_{\mathcal G}} \sum_{\lambda_m\in B(\lambda, 2N)} \sum_{\lambda_k \in V_{N}}
W_{A, N}(\lambda_m, \lambda_k )\alpha_{\lambda_k}^{-1/p}
\|\Psi_{\lambda_k}^{2N} {A}c\|_{p,w}
\nonumber \\
&
\lesssim &
 (A_p(w))^{1/p}  (\beta_{p, w}(A))^{-1}
N^{d_{\mathcal G}} \sum_{\lambda'\in V}
H_{{A}, N}(\lambda, \lambda')|{A}c(\lambda')|\ \ {\rm for \ all} \ c\in \ell^p_w,
\end{eqnarray}
where the last inequality holds as
$\alpha_{\lambda_k}^{-1/p}
\|\Psi_{\lambda_k}^{2N} {A}c\|_{p,w}\le \|\Psi_{\lambda_k}^{2N} {A}c\|_{p,w_0}\le \|\Psi_{\lambda_k}^{2N} {A}c\|_{1,w_0}.$
This proves \eqref{tech.lem2.eq3}.

\subsection{Proof of Theorem \ref{norm-contro-inversion.thm}}\label{norm-contro-inversion.thm.pfsection}
By the invertibility assumption of the matrix  $A$
 in $\ell^p_w$,
it has the $\ell^p_w$-stability \eqref{mainthm.eq-1} and its optimal lower   stability bound  $\beta_{p, w}(A)$ satisfies
\begin{equation} \label{wiener.pff.eq1}
\beta_{p, w}(A) \ge \big(\|A^{-1} \|_{{\mathcal B}(\ell^p_w)}\big)^{-1}.
\end{equation}
Let $r'$ be the conjugate exponent of $r$, i.e., $1/r+1/r'=1$,   $N\ge 2$ be an integer   satisfying
\begin{equation}\label{wiener.pff.eq3}
\big(\|A^{-1} \|_{{\mathcal B}(\ell^p_w)}\big)^{-1}\ge 2 \max(C_2 C_3, C_1) \big(A_p(w)\big)^{2/p}
\|{A}\|_{{\mathcal B}_{r,\alpha}}
\times
\left\{\begin{array}{ll} N^{-\min(1,\alpha-d_{\mathcal G}/r') } & {\rm if} \ \alpha\ne d_{\mathcal G}/r'+1\\
N^{-1} (\ln (N+1))^{1/r'}  & {\rm if} \ \alpha=d_{\mathcal G}/r'+1,
\end{array}\right.
\end{equation}
and
$H_{A, N}=(H_{A, N}(\lambda, \lambda'))_{\lambda, \lambda'\in V}$ be as in \eqref{def-H} except replacing $\beta_{p, w}(A)$ by
$(\|A^{-1} \|_{{\mathcal B}(\ell^p_w)})^{-1}$.
Following the argument used in the proof of Lemma \ref{tech.lem2},
we obtain
\begin{equation}\label{wiener.pff.eq2-}
\|H_{A,N}\|_{{\mathcal B}_{r, \alpha}}\lesssim 
N^{\alpha+d_{\mathcal G}/r}
\end{equation}
and
\begin{equation}\label{wiener.pff.eq2}
|c(\lambda)|
\lesssim
 (A_p(w))^{1/p}  \|A^{-1} \|_{{\mathcal B}(\ell^p_w)}
N^{d_{\mathcal G}} \sum_{\lambda'\in V}
H_{A, N}(\lambda, \lambda')|(Ac)(\lambda')|, \ c=(c(\lambda))_{\lambda\in V}\in \ell^p_w.
\end{equation}

Write $A^{-1}:=(\check{a}(\lambda', \lambda))_{\lambda', \lambda\in V}$ and denote
$\check{a}_{\lambda}:=(\check{a}(\lambda', \lambda))_{\lambda'\in V}, \ \lambda\in V$.
Then  $\check{a}_{\lambda}\in \ell^p_w$ by \eqref{wiener.pff.eq1} and  the invertibility of the matrix $A$. 
Replacing $c$ in \eqref{wiener.pff.eq2} by $\check{a}_{\lambda}$,
we get
\begin{eqnarray}\label{wiener.pff.eq4}
|\check{a}(\lambda', \lambda)|
 & \lesssim &
 (A_p(w))^{1/p}  \|A^{-1} \|_{{\mathcal B}(\ell^p_w)}
N^{d_{\mathcal G}} \sum_{\lambda^{\prime\prime}\in V}
H_{{A}, N}(\lambda', \lambda^{\prime\prime})|({A}\check a_{\lambda}) (\lambda^{\prime\prime})|\nonumber\\
& = & (A_p(w))^{1/p}  \|A^{-1} \|_{{\mathcal B}(\ell^p_w)} N^{d_{\mathcal G}} H_{{A}, N}(\lambda', \lambda)\ \ {\rm for \ all} \ \lambda, \lambda'\in V.
\end{eqnarray}
This together with  \eqref{wiener.pff.eq2-}  and the solidness of the Beurling algebra
${\mathcal B}_{r, \alpha}({\mathcal G})$ in Proposition \ref{beurling.prop}
implies that
\begin{equation}  \label{wiener.pff.eq5}
\|A^{-1}\|_{{\mathcal B}_{r,  \alpha}} \lesssim
(A_p(w))^{1/p}  \|A^{-1} \|_{{\mathcal B}(\ell^p_w)} N^{d_{\mathcal G}} \|H_{{A}, N}\|_{{\mathcal B}_{r, \alpha}}
\lesssim  (A_p(w))^{1/p}  \|A^{-1} \|_{{\mathcal B}(\ell^p_w)}N^{\alpha+d_{\mathcal G}(1+1/r)}.
\end{equation}

Define
\begin{equation}\label{tech.lem4.N1}
N_1=
\left\{\begin{array}{ll}
\tilde N_1
 & {\rm if} \ \alpha\ne d_{\mathcal G}/r'+1\\
2 \tilde N_1 (\ln (\tilde N_1+1))^{1/r'}
 & {\rm if} \ \alpha= d_{\mathcal G}/r'+1,
\end{array}\right.
\end{equation}
where
$$\tilde N_1=\left\lfloor \Big (2 \max(C_1, C_2 C_3) \big(A_p(w)\big)^{2/p}\|A^{-1}\|_{\mathcal B(\ell^p_{w})}
\|{A}\|_{{\mathcal B}_{r,\alpha}}\Big)^{1/\min(1,\alpha-d_{\mathcal G}/r')}\right\rfloor +2$$
and  $C_1, C_2, C_3$ are absolute constants in \eqref{tech.lem.eq1}, \eqref{tech.lem.eq2} and \eqref{norm-power} respectively.
One may verify that  $N_1$ satisfies \eqref{wiener.pff.eq3}.
Then replacing $N$ in \eqref {wiener.pff.eq5} by the above integer $N_1$
completes the proof.

\begin{thebibliography}{999}
\bibitem{akyildiz02} I. F. Akyildiz, W. Su, Y. Sankarasubramaniam and E. Cayirci, Wireless sensor networks: a survey, {\em Comput. Netw.}, {\bf 38}(2002), 393--422.

\bibitem{akramjfa09}
A. Aldroubi, A. Baskakov and I. Krishtal, Slanted matrices, Banach frames,
and sampling, {\em J. Funct. Anal.}, {\bf 255}(2008), 1667--1691.

\bibitem{aldroubisiamreview01}
    A. Aldroubi and K. Gr\"ochenig, Nonuniform sampling and reconstruction in shift-invariant
spaces, {\em SIAM Review}, {\bf 43}(2001), 585--620.

\bibitem{barnes90} B. A. Barnes, When is the spectrum of a
convolution operator on $L^p$ independent of $p$? {\em Proc.
Edinburgh Math. Soc.}, {\bf 33}(1990), 327--332.

\bibitem{BE14} L. Bartholdi and A. Erschler, Groups of given intermediate word growth, {\em Ann.
Inst. Fourier (Grenoble)}, {\bf  64}(2014),  2003--2036.

\bibitem{BE12} L. Bartholdi and A. Erschler, Growth of permutational extensions, {\em Invent. Math.}, {\bf 189}(2012),  431--455.

\bibitem{baskakov90} A. G. Baskakov,  Wiener's theorem and
asymptotic estimates for elements of inverse matrices, {\em
Funktsional. Anal. i Prilozhen},  {\bf 24}(1990),  64--65;
translation in {\em Funct. Anal. Appl.},  {\bf 24}(1990),
222--224.

\bibitem{belinskiijfaa97} E. S. Belinskii, E. R. Liflyand and R. M. Trigub,
The Banach algebra $A^*$ and its properties, {\em J. Fourier Anal. Appl.}, {\bf 3}(1997), 103--129.

\bibitem{beurling49} A. Beurling, On the spectral synthesis of bounded functions, {\em Acta Math.},  {\bf 81}(1949),
 225--238.

\bibitem{blackadarcuntz91}
B. Blackadar and J. Cuntz, Differential Banach algebra norms and smooth subalgebras of
$C^*$-algebras,  {\em J. Operator Theory}, {\bf 26}(1991), 255--282.

\bibitem{CJS18} C. Cheng, Y. Jiang and Q. Sun, Spatially distributed sampling and reconstruction,
{\em Appl. Comput. Harmon. Anal.},  {\bf 47}(2019),  109--148.

\bibitem{chong02}
 C. Chong and S. Kumar, Sensor networks: evolution, opportunities, and challenges, {\em Proc. IEEE}, {\bf 91}(2003), 1247--1256.

\bibitem{christ88} M. Christ, Inversion in some algebra of singular integral operators,
{\em Rev. Mat. Iberoamericana}, {\bf 4}(1988), 219--225.

\bibitem{christensenbook} O. Christensen, {\em An Introduction to Frames and Riesz Bases}, Birkh\"auser Basel, 2003.

\bibitem{chungbook} F. R. K. Chung, {\em Spectral Graph Theory},
American Mathematical Society,  1997.

\bibitem{fang18} Q. Fang and C. E. Shin, Stability of localized integral operatos on normal  spaces of homogenous type, {\em Numer. Funct. Anal. Optim.},
    {\bf 40}(2019), 491--512.

    \bibitem{fang17} Q. Fang and C. E. Shin, Stability of integral operatos on
a space of homogenous type, {\em Math. Nachr.}, {\bf 290}(2017), 284--292.

\bibitem{garciabook} J. Garcia-Cuerva and J. L. Rubio de Francia,   {\em Weighted Norm Inequalities and Related Topics},   Elsevier, New York, 1985.

\bibitem{grochenigbook} K. Gr\"ochenig, {\em  Foundations of Time-Frequency Analysis}, Birkh\"auser Basel, 2001.

\bibitem{grochenig10}
K. Gr\"ochenig, Wiener's lemma: theme and variations, an introduction to
spectral invariance and its applications, In {\em Four Short Courses on Harmonic Analysis: Wavelets, Frames, Time-Frequency Methods, and
Applications to Signal and Image Analysis}, edited by P. Massopust and B. Forster,
Birkhauser, Boston 2010, pp. 175--234.

\bibitem{gkII} K. Gr\"ochenig and A. Klotz, Norm-controlled inversion in smooth Banach algebra II,
{\em Math. Nachr.}, {\bf 287}(2014), 917-937.

\bibitem{gkI} K. Gr\"ochenig and A. Klotz, Norm-controlled inversion in smooth Banach algebra I,
{\em J. London Math. Soc.}, {\bf 88}(2013), 49--64.

\bibitem{grochenigklotz10} K. Gr\"ochenig and A. Klotz,
Noncommutative approximation: inverse-closed subalgebras and off-diagonal decay of matrices,
{\em Constr. Approx.}, {\bf 32}(2010), 429--466.

\bibitem{gltams06} K. Gr\"ochenig and M. Leinert, Symmetry of matrix
algebras and symbolic calculus for infinite matrices, {\em Trans.
Amer. Math. Soc.},  {\bf 358}(2006),  2695--2711.

\bibitem{jaffard90} S. Jaffard, Properi\'et\'es des matrices bien
localis\'ees pr\'es de leur diagonale et quelques applications,
{\em Ann. Inst. Inst. H. Poincar\'e Anal. Non Lin\'eaire.}, {\bf 7}(1990), 461--476.

\bibitem{Keith2008}
S. Keith and X. Zhong, The Poincar\'{e} inequality is an open ended condition,
{\em Ann. Math.}, {\bf 167}(2008), 575--599.

\bibitem{kissin94} E. Kissin and V. S. Shulman, Differential properties of some dense subalgebras of $C^*$-algebras, {\em  Proc.
Edinburgh Math. Soc.}, {\bf 37}(1994),  399--422.

\bibitem{Krishtal11} I. Krishtal, Wiener's lemma: pictures at exhibition,
{\em Rev. Un. Mat. Argentina}, {\bf 52}(2011), 61--79.

\bibitem{LO01}  V. Losert, On the structure of groups with polynomial growth II, {\em J. London Math. Soc.}, {\bf  63}(2001),  640--654.

\bibitem{moteesun17} N. Motee and Q. Sun, Sparsity and spatial localization measures for spatially distributed systems,
{\em SIAM J. Control Optim.}, {\bf 55}(2017), 200--235.

\bibitem{moteesun19} N. Motee and Q. Sun,
Localized stability certificates for spatially distributed systems over sparse proximity graphs, submitted.

\bibitem{nikolski99} N. Nikolski,  In search of the invisible spectrum, {\em  Ann. Inst. Fourier (Grenoble)},  {\bf 49}(1999), 1925--1998.

\bibitem{rieffel10}  M. A. Rieffel, Leibniz seminorms for ``matrix algebras converge to the sphere", In {\em Quanta of Maths,
Volume 11 of Clay Math. Proc.},  Amer. Math. Soc., Providence, RI, 2010, pp. 543--578.

\bibitem{rssun12} K. S. Rim, C. E. Shin and Q. Sun, Stability of localized integral operators on weighted $L^p$ spaces,
{\em Numer.  Funct.  Anal. Optim.},  {\bf 33}(2012), 1166--1193.

\bibitem{samei19} E. Samei and V. Shepelska, Norm-controlled inversion in weighted convolution algebra,  {\em J. Fourier Anal. Appl.}, to appear.
DOI https://doi.org/10.1007/s00041-019-09690-0

\bibitem{shinsun20} C. E. Shin and Q. Sun,
Differential subalgebras and norm-controlled inversion, submitted.

\bibitem{shinsun19} C. E. Shin and Q. Sun,
Polynomial control on stability, inversion and powers of matrices on simple graphs,
	 {\em J. Funct. Anal.}, {\bf 276}(2019), 148--182.

 \bibitem{shinsun13} C. E. Shin and Q. Sun,
Wiener's lemma: localization and various approaches, {\em Appl. Math. J. Chinese Univ.}, {\bf 28}(2013), 465--484.

\bibitem{shincjfa09} C. E. Shin and Q. Sun,
 Stability of localized operators, {\em J. Funct. Anal.}, {\bf 256}(2009), 2417--2439.

\bibitem{sjostrand94} J. Sj\"ostrand, Wiener type algebra of pseudodifferential operators, Centre
de Mathematiques, Ecole Polytechnique, Palaiseau France, Seminaire 1994,
1995, December 1994.


\bibitem{sunca11}
Q. Sun, Wiener's lemma for infinite matrices II, {\em Constr. Approx.}, {\bf 34}(2011), 209--235.

        \bibitem{sunpams10} Q. Sun, Stability criterion for convolution-dominated infinite matrices,
{\em Proc. Amer. Math. Soc.}, {\bf 138}(2010),  3933--3943. 

\bibitem{suntams07} Q. Sun, Wiener's lemma for infinite matrices,
{\em Trans. Amer. Math. Soc.},  {\bf 359}(2007), 3099--3123.

    \bibitem{sunsiam06}    Q. Sun, Non-uniform average sampling and reconstruction of signals with finite rate of innovation, {\em SIAM J. Math. Anal.}, {\bf 38}(2006), 1389--1422.

\bibitem{suncasp05} Q. Sun, Wiener's lemma for infinite matrices with polynomial off-diagonal decay, {\em C. Acad. Sci. Paris Ser. I  Math.}, {\bf 340}(2005), 567--570.
%



    \bibitem{sunxian14} Q. Sun and J. Xian, Rate of innovation for (non)-periodic signals and optimal lower stability bound for filtering,
{\em J. Fourier Anal. Appl.}, {\bf 20}(2014), 119--134.

    \bibitem{tesserajfa10} R. Tessera,  Left inverses of matrices with polynomial decay, {\em  J. Funct. Anal.}, {\bf 259}(2010),  2793--2813.

%

\bibitem{wiener32}
 N. Wiener,  Tauberian theorem, {\em Ann. Math.}, {\bf 33}(1932), 1--100.

\bibitem{YYH13book} Da. Yang, Do. Yang and G. Hu, {\em The Hardy Space $H^1$ with Non-doubling
Measures and Their Applications,} Lecture Notes in Mathematics 2084, Springer,
2013.

\end {thebibliography}
\end{document}